\documentclass[onefignum,onetabnum,article]{siamart171218}

\usepackage{amsmath,amssymb, graphicx,latexsym}

\usepackage{array}
\usepackage{caption}
\captionsetup{compatibility=false}
\usepackage{url}
\usepackage{color}
\usepackage{epstopdf}

\usepackage{algorithm}
\usepackage{algorithmic}
\usepackage{mathtools}
\usepackage{multirow}
\usepackage{float}
\usepackage{geometry}
\usepackage{mathptmx}      
%
\usepackage{latexsym}

\usepackage{amsfonts}
\usepackage{graphicx}
\usepackage{epstopdf}
\usepackage{algorithmic}

\usepackage{tikz}
\newtheorem{Thm}{Theorem}[section]
\newtheorem{Def}{Definition}[section]
\newtheorem{Le}{Lemma}[section]
\newtheorem{Coro}{Corollary}[section]
\newtheorem{Rem}{Remark}[section]

\numberwithin{equation}{section}

\ifpdf
\DeclareGraphicsExtensions{.eps,.pdf,.png,.jpg}
\else
\DeclareGraphicsExtensions{.eps}
\fi


\newcommand{\be}[1]{\begin{equation}\label{#1}}
	\newcommand{\ee}{\end{equation}}



\newsiamremark{remark}{Remark}
\newsiamremark{example}{Example}
\newsiamremark{hypothesis}{Hypothesis}
\crefname{hypothesis}{Hypothesis}{Hypotheses}
\newsiamthm{claim}{Claim}

\headers{Stability for a multi-frequency inverse random source problem}{T. Wang, X. Xu and Y. Zhao}

\title{Stability for a multi-frequency inverse random source problem}

\author{Tianjiao Wang\thanks{School of Mathematical Sciences, Zhejiang University, Hangzhou, Zhejiang, China, 310058. (\email{wangtianjiao@zju.edu.cn}).
	}
	\and Xiang Xu\thanks{School of Mathematical Sciences, Zhejiang University, Hangzhou, Zhejiang, China,310058. XX's work was supported in part by National Key Research and Development Program of China (2023YFA1009100) and National
		Natural Science Foundation of China (12071430), Key Laboratory
		of Collaborative Sensing and Autonomous Unmanned Systems of Zhejiang Province.  (\email{xxu@zju.edu.cn}). }
	\and Yue Zhao \thanks{ School of Mathematics and Statistics, and Key Lab NAA--MOE, Central China Normal University,
		Wuhan 430079, China.
		(\email{zhaoyueccnu@163.com}).
	}
}

\usepackage{amsopn}

\makeatletter
\newcommand*{\addFileDependency}[1]{
	\typeout{(#1)}
	\@addtofilelist{#1}
	\IfFileExists{#1}{}{\typeout{No file #1.}}
}
\makeatother






\begin{document}
	\maketitle

		\begin{abstract}

We present stability estimates for the inverse source problem of the stochastic Helmholtz equation in two and three dimensions by either near-field or far-field data.
The random source is assumed to be a microlocally isotropic generalized Gaussian random function. 
For the direct problem, by exploring the regularity of the Green function, we demonstrate that the direct problem admits a unique bounded solution with an explicit integral representation, which enhances the existing regularity result.
For the case using near-field data, the analysis of the inverse problem employs microlocal analysis to achieve an estimate for the Fourier transform of the micro-correlation strength by the near-field correlation data and a high-frequency tail. The stability follows by showing the analyticity of the data and applying a novel analytic continuation principle.
The stability estimate by far-field data is derived by investigating the correlation of the far-field data. The stability estimate consists of the Lipschitz type data discrepancy and the logarithmic stability. The latter decreases as the upper bound of the frequency increases,
which exhibits the phenomenon of increasing stability.

		\end{abstract}
		\begin{keywords}
		increasing stability, inverse source problem, generalized Gaussian random function.
		\end{keywords}
		
	\begin{AMS}
		35Q74, 35R30, 78A46
	\end{AMS}
	
	\section{Introduction}\label{intro}
Inverse source scattering problem is concerned with recovering the unknown source from near-field or far-field data away from its support. Such problem has generated tremendous interest due to its wide applications in scientific and engineering fields such as seismology, telecommunications, medical imaging, antenna synthesis, radar technology, and magnetoencephalography \cite{arridge1999optical,bao2002inverse,bao2010multi,isakov1990inverse}. However, the non-uniqueness of the inverse source problem at a single frequency, caused by the existence of non-radiating sources, poses a challenge \cite{bleistein1977nonuniqueness,devaney1982nonuniqueness}. Consequently, additional information is required for a unique determination of the source. To resolve this issue, the use of multi-frequency data has been realized to be an effective approach to regain the uniqueness and achieve enhanced  stability \cite{bao2020stability,bao2010multi, 2016increasing, li2020stability}. 

	In many applications, the source is often considered as a random field due to uncertainties in the surrounding environment or random measurement noise \cite{badieirostami2010wiener}. The presence of randomness introduces additional challenges compared to deterministic source scattering. Specifically, the regularities of wave fields tend to be lower, and the measurements become statistical data. 
	Inverse source problems driven by Wiener process have been extensively investigated \cite{bao2016random,bao2017random,bao2014random,li2017random,li2017randomstab}. 
	In recent studies, uniqueness of inverse source problems have been studied in \cite{Helin2020inverse, li2022far, li2021inverse} by assuming the source to be a  generalized Gaussian random field.
	The covariance of such random field is a classical pseudo-differential operator with a principal symbol taking the form $h(x)|\xi|^{-m}$, where $h$ is called micro-correlation strength. This model encompasses various important stochastic processes, including white noise, fractional Brownian motion, and Markov fields \cite{lassas2008inverse}. Compared with the many uniqueness results of the inverse random source problems, the stability has been much less studied.
	To the best of our knowledge, the only existing stability results were obtained in \cite{li2023stability, li2017increasing} \textcolor{black}{driven by Wiener process}. The corresponding stability
	for the generalized Gaussian random field remains unsolved.
	
	In numerical experiments in \cite{bao2016random, li2017random}, it has been observed that the ill-posedness of the inverse random source problem can be overcome by using multi-frequency data which yields increasing stability, i.e.,
	as the frequency increases the inverse problem becomes more stable. The goal of this work is to mathematical verify the increasing stability with a generalized Gaussian random field. Specifically, consider the stochastic Helmholtz equation \begin{equation} \label{eq3.1}
		\Delta u+ k^2 u=f, \quad \mathrm{in} \quad \mathbb{R}^d, \, d = 2, 3,
	\end{equation} where $k>0$ is the wavenumber. The source function $f$ is a microlocally isotropic general Gaussian random field of order $m$ in a bounded \textcolor{black}{Lipschitz} domain $D \subset \mathbb{R}^d$ (see Section \ref{pre} for a detailed definition). The wave field $u$ is required to satisfy the Sommerfeld radiation condition \begin{equation}\label{eq3.2}
		\lim_{r \to \infty} r^{\frac{d-1}{2}} (\partial_r u- ik u)=0,\quad r=|x|.
	\end{equation}
	Let $B_R = \{x: x\in\mathbb R^d, \, |x|\leq R\}$ with boundary $\partial B_R$ and assume that $D\subset\subset B_R$. The inverse problem is to determine  the principle symbol of $f$ from either the near-field data on $\partial B_R$ or far-field data.

	Compared with the deterministic case, new challenges arise due to the roughness and randomness of the source. The well-posedness of the direct problem
	\eqref{eq3.1}--\eqref{eq3.2} has been discussed in \cite{li2022far}. However, it is not clear if the solution can be represented by the convolution of the Green function
	and the source as in the classical setting. By studying the regularity of the Green function, we establish an explicit integral representation of the solution which holds pointwisely. As a consequence, we obtain an enhanced regularity result.
	The analysis of the inverse problem employs microlocal analysis to achieve an estimate for the Fourier transform of the micro-correlation strength by the near-field correlation data and a high-frequency tail. Next, we show that the correlation data is analytic and derive an upper bound with respect to complex wave number.
	The stability estimate follows by an application of a novel analytic continuation developed in \cite{zhai2023increasing}. For the case of far-field data, the stability estimate can be derived by investigating the
	correlation of the far-field data. The stability has a unified form which consists of the Lipschitz data discrepancy and a logarithmic stability, where the latter
	decreases as the frequency increases.

The rest of this paper is organized as follows.  Section \ref{pre} is devoted to the definition and some properties of the microlocally isotropic Gaussian random function. In Section \ref{direct}, we derive a regularity result for the Green function of the Helmholtz equation which leads to an explicit integral representation of the direct problem. The main increasing stability results are presented in Section \ref{near} and Section \ref{far} for near-field data and far-field data, respectively.  A conclusion is given in Section \ref{conclusions}. Throughout this paper, $a \lesssim b $ stands for $a \le C b$, where $C>0$ is a generic constant whose special value is not required but should be clear from the context.
	
	\section{Preliminaries}\label{pre}
	In this section, we state the properties of microlocally isotropic generalized Gaussian random functions. Let $(\Omega, \mathcal{A}, \mathbb{P})$ be a complete probability space. Denote the test function space by $\mathcal{D}$, which consists of smooth functions with compact supports in $\mathbb{R}^d(d=2,3)$. Then the dual space of $\mathcal{D}$ is denoted as $\mathcal{D}'$. A scalar field $f$ is said to be a real-valued generalized Gaussian random function if $f\,:\, \Omega \to \mathcal{D}'$ is a distribution such that 
	for each $\omega\in\Omega$ the path $f[\cdot](\omega)\in \mathcal{D}'$ is a linear functional on $\mathcal{D}$, and
	$\omega \mapsto \langle f(\omega), \phi \rangle$ is a real-valued Gaussian random variable for all $\phi \in \mathcal{D}$. The expectation of $f$ is a generalized function defined by \[
	\mathbb{E}f\, :\, \phi \mapsto \mathbb{E}\langle f,\phi \rangle, \quad \phi \in \mathcal{D},
	\] and the covariance is a bilinear form given by \[
	\mathrm{Cov}f\,:\, (\phi_1, \phi_2) \mapsto \mathrm{Cov}(\langle f,\phi_1 \rangle, \langle f,\phi_2 \rangle), \quad \phi_1, \phi_2 \in \mathcal{D}.
	\] Then define the covariance operator $C_f\,:\, \mathcal{D} \to \mathcal{D}'$ by \[
	\langle C_f \phi_1, \phi_2 \rangle = \mathrm{Cov}(\langle f,\phi_1 \rangle, \langle f,\phi_2 \rangle), \quad \phi_1, \phi_2 \in \mathcal{D},
	\] which is associated with a Schwartz kernel denoted by $K_f(x,y)$ as follows \[
	\langle C_f \phi_1, \phi_2 \rangle=\int_{\mathbb{R^d}}\int_{\mathbb{R^d}} K_f(x,y) \phi_1(x)\phi_2(y) \,\mathrm{d}x\mathrm{d}y, \mbox{ and }
	K_f(x,y)=\mathbb{E}[(f(x)-\mathbb{E}f(x))(f(y)-\mathbb{E}f(y))].
	\]
	
	In this paper, the random source is assumed to be characterized by a special class of generalized Gaussian random functions as follows.
	\begin{Def}
		A generalized Gaussian random function $f$ with zero expectation is called \textbf{microlocally isotropic of order} $m \in \mathbb{R}$ in the domain $D \subset \mathbb{R}^d$, if $\text{supp}\, f \subset \subset D$ for almost surely $\omega \in \Omega$ and its covariance operator $C_f$ is a classical pseudo-differential operator with the principal symbol $h(x) \big| \xi \big|^{-m}$ where $0 \le h \in C^\infty_0(\mathbb{R}^d)$ and $\text{supp} \, h \subset \subset D$.
	\end{Def} 

The smooth function $h$ is called the micro-correlation strength of the random function $f$. Let $c(x,\xi)$ be the symbol of $C_f$. The operator can be represented by \[
	C_f(\phi)(x)=(2 \pi)^{-d} \int_{\mathbb{R}^d} e^{i x \cdot \xi} c(x, \xi) \hat{\phi}(\xi) \,\mathrm{d}\xi, \quad \phi \in \mathcal{D},
	\] where $\hat{\phi}(\xi)$ stands for the Fourier transform of $\phi$ defined by $
	\hat{\phi}(\xi)=\mathcal{F}\phi(\xi):=\int_{\mathbb{R}^d} e^{-ix \cdot \xi} \phi(x) \,\mathrm{d}x.$  
The kernel $K_f$ can be represented as an oscillatory integral of the form \begin{equation}\label{eq2.1}
		K_f(x,y)=(2\pi)^{-d} \int_{\mathbb{R}^d} e^{i(x-y) \cdot \xi} c(x, \xi) \,\mathrm{d}\xi.
	\end{equation}
		
	We introduce the following regularity result in  \cite{li2021inverse} for microlocally isotropic Gaussian random functions.
	\begin{Le}\label{le2.1}
		Let $f$ be a microlocally isotropic Gaussian random function of order $m$. Then $f \in W^{\frac{m-d}{2}-\epsilon,p}(D)$ almost surely for all $\epsilon>0$ and $1<p<\infty$.
	\end{Le}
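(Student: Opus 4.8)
The plan is to prove the stronger integrated bound $\mathbb{E}\big[\|f\|_{W^{s,p}(\mathbb{R}^d)}^p\big] < \infty$ with $s := \frac{m-d}{2} - \epsilon$; since a nonnegative random variable with finite mean is finite almost surely, and since restriction to $D$ only decreases the norm, this yields $f \in W^{s,p}(D)$ almost surely. I would use the Bessel-potential description $\|f\|_{W^{s,p}} = \|(I-\Delta)^{s/2}f\|_{L^p}$, valid for every real $s$ and $1<p<\infty$, and set $g := (I-\Delta)^{s/2}f$. As $\langle\xi\rangle^s$ is a real even Fourier multiplier, $g$ is again a centered real Gaussian field; granting that $g$ admits a jointly measurable version with finite pointwise variance (which the estimate below confirms), Tonelli's theorem together with the Gaussian moment identity give
$$\mathbb{E}\big[\|f\|_{W^{s,p}}^p\big] = \int_{\mathbb{R}^d}\mathbb{E}|g(x)|^p\,\d x = C_p\int_{\mathbb{R}^d}\big(\mathbb{E}|g(x)|^2\big)^{p/2}\,\d x,$$
where $C_p = \mathbb{E}|Z|^p$ for a standard normal $Z$, and $\mathbb{E}|g(x)|^2 = K_g(x,x)$ is the diagonal of the covariance kernel of $g$.

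The heart of the matter is to bound this pointwise variance. The covariance operator of $g$ is $C_g = (I-\Delta)^{s/2}C_f(I-\Delta)^{s/2}$, which by the pseudodifferential calculus is again a classical pseudodifferential operator, now of order $2s-m$; its full symbol $\sigma(x,\xi)$ has leading part $h(x)|\xi|^{2s-m}$ as $|\xi|\to\infty$ and obeys $|\sigma(x,\xi)|\lesssim\langle\xi\rangle^{2s-m}$ uniformly for $x$ in compact sets. Reading the diagonal off the oscillatory-integral form \eqref{eq2.1},
$$K_g(x,x) = (2\pi)^{-d}\int_{\mathbb{R}^d}\sigma(x,\xi)\,\d\xi.$$
Since $\sigma$ is smooth and bounded near $\xi=0$ — the homogeneity $|\xi|^{-m}$ only describes the behavior at infinity — the sole convergence issue is at large $|\xi|$, and the integral converges precisely when $2s-m<-d$, i.e. $s<\frac{m-d}{2}$, which our choice $s=\frac{m-d}{2}-\epsilon$ satisfies. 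This yields $\sup_{x}K_g(x,x)<\infty$, with the leading contribution to $K_g(x,x)$ proportional to $h(x)$.

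To close the spatial integral over $\mathbb{R}^d$ I would use that $f$ is compactly supported in $D$ together with the exponential off-diagonal decay of the Bessel kernel, so that $K_g(x,x)$ decays rapidly as $|x|\to\infty$; combined with the uniform bound, this makes $\int_{\mathbb{R}^d}(K_g(x,x))^{p/2}\,\d x$ finite and completes the proof. I expect the main obstacle to be the bookkeeping in the pseudodifferential calculus: one must verify that $C_g$ is genuinely classical of order $2s-m$ with the stated symbol estimates, control the lower-order terms of the composition uniformly in $x$, and confirm that the singularity of the principal symbol $h(x)|\xi|^{-m}$ at the origin is harmless. The auxiliary measurability statement and the Gaussian moment identity are routine.
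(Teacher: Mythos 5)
Your proposal is correct and is essentially the canonical proof of this lemma: the paper itself does not prove it but quotes it from \cite{li2021inverse} (the argument going back to Lassas--P\"aiv\"arinta--Siltanen), and that proof is precisely yours --- conjugate $f$ by the Bessel potential $(I-\Delta)^{s/2}$, reduce $\mathbb{E}\|g\|_{L^p}^p$ via Tonelli and the Gaussian moment identity to $\int (K_g(x,x))^{p/2}\,\mathrm{d}x$, and bound the diagonal of the kernel of $C_g=(I-\Delta)^{s/2}C_f(I-\Delta)^{s/2}$, a pseudodifferential operator of order $2s-m<-d$, using the compact support of $K_f$ and the off-diagonal decay of the Bessel kernel for the spatial integrability. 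The one point worth making explicit is that for $p\neq 2$ and non-integer $s$ the Slobodeckij space $W^{s,p}$ and the Bessel-potential space you actually control differ, but the embeddings between them cost an arbitrarily small loss of smoothness, which the $\epsilon$ in the statement absorbs.
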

	By Sobolev embedding theorem, Lemma \ref{le2.1} gives the following corollary.
	\begin{Coro}\label{coro2.1}
		Let $f$ be a microlocally isotropic Gaussian random function of order $m$. \textcolor{black}{Suppose $D$ is a Lipschitz domain such that $\text{ supp}f \subset D$.} For any $a \in \mathbb{N}$ satisfying $d+2a<m \le d+2(a+1)$, we have $f \in C^{a, \gamma}\textcolor{black}{(\bar{D})}$ almost surely with $\gamma \in (0, \frac{m-d}{2}-a)$.
	\end{Coro}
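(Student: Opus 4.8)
The plan is to deduce the corollary directly from the fractional Sobolev embedding into Hölder spaces, using Lemma~\ref{le2.1} as the only analytic input. First I would record the elementary arithmetic forced by the hypothesis $d+2a<m\le d+2(a+1)$: dividing by $2$ and subtracting $a$ gives $0<\frac{m-d}{2}-a\le 1$, so every $\gamma\in(0,\frac{m-d}{2}-a)$ is automatically a legitimate Hölder exponent in $(0,1)$. Setting $\delta:=\frac{m-d}{2}-a-\gamma>0$, the goal reduces to producing a single pair $(\eps,p)$ for which the Sobolev smoothness $s=\frac{m-d}{2}-\eps$ supplied by Lemma~\ref{le2.1} exceeds the target Hölder regularity by more than the Morrey gap $d/p$.

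Second, I would fix the parameters quantitatively. Since $\delta>0$, choose $\eps:=\delta/4$ and any exponent $p$ with $p>4d/\delta$, so that $\eps+\frac{d}{p}<\frac{\delta}{2}<\delta$. With $s=\frac{m-d}{2}-\eps$ this is exactly the strict inequality $s-\frac{d}{p}>a+\gamma$. By Lemma~\ref{le2.1} applied to this fixed pair $(\eps,p)$, we have $f\in W^{s,p}(D)$ almost surely; no countable intersection over parameters is needed, since a single admissible choice already delivers the conclusion.

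Third, I would invoke the Sobolev embedding theorem pathwise. For a bounded Lipschitz domain $D\subset\mathbb{R}^d$ one has the embedding $W^{s,p}(D)\hookrightarrow C^{a,\gamma}(\bar D)$ whenever $s-\frac{d}{p}>a+\gamma$ with $0<\gamma<1$ (writing $s-\frac{d}{p}=a+\beta$ with $\beta>\gamma$, the sharp embedding lands in $C^{a,\beta}$, and $C^{a,\beta}(\bar D)\hookrightarrow C^{a,\gamma}(\bar D)$ on bounded domains). Applying this deterministically to $f[\cdot](\omega)$ for each $\omega$ in the almost sure event from Step~2 gives $f\in C^{a,\gamma}(\bar D)$ almost surely, which is precisely the asserted statement.

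The argument is short and largely bookkeeping; the only point demanding care is the embedding on a merely Lipschitz, rather than smooth, domain. I expect the main obstacle to be justifying the fractional Morrey embedding in this setting, which rests on the existence of a bounded Stein-type extension operator $W^{s,p}(D)\to W^{s,p}(\mathbb{R}^d)$ for Lipschitz domains; once the extension is available the classical embedding on $\mathbb{R}^d$ applies and restriction returns the estimate on $\bar D$. One should also confirm that the Lipschitz hypothesis on $D$ together with $\operatorname{supp}f\subset D$ is compatible with evaluating the continuous representative of $f$ up to the boundary, but this too follows directly from the extension.
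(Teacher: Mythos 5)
Your proposal is correct and follows exactly the paper's route: the paper derives the corollary in one line by applying the Sobolev embedding theorem to Lemma~\ref{le2.1}, which is precisely your argument. Your write-up simply makes explicit the parameter bookkeeping ($\eps$, $p$, the Morrey gap) and the Lipschitz-domain extension issue that the paper leaves implicit.
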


 	\section{Direct problem}\label{direct}
	In this section, we investigate the direct scattering problem. The Green function of the Helmholtz equation $\Phi(x,y)$ has the explicit form \begin{align} \label{eq3.3}
		\Phi(x,y)=\left\{\begin{array}{cc}
			\frac{i}{4} H^{(1)}_0 (k|x-y|),  &  d=2,\\
			\frac{e^{ik|x-y|}}{4\pi |x-y| },  & d=3,
		\end{array}\right.
	\end{align} where $H^{(1)}_0$ is the Hankel function of the first kind with order zero.
	According to Corollary \ref{coro2.1}, for $m>d$, the source $f \in C^{0, \gamma}\textcolor{black}{(\bar{D})} \subset L^2(D)$ with some $\gamma>0$ almost surely. Hence, the scattering problem \eqref{eq3.1}-\eqref{eq3.2} is classical which admits a unique solution $u \in H^2_{loc}(\mathbb{R}^d)$ with the following explicit integral form 
	\begin{align}\label{explicit}
	u= -\int_{\mathbb{R}^d} \Phi(x,y) f(y)\,\mathrm{d}x.
	\end{align}
	However, for $m \le d$, Lemma \ref{le2.1} shows that such source is too rough to exist pointwisely which should be taken as distribution. In this case, the following theorem proved in \cite{li2022far} gives the well-posedness and regularity of the direct problem. 
	\begin{Thm}\label{lax}
		Let $f$ be a microlocally isotropic Gaussian random function of order $m \in (d-4, d]$. The problem \eqref{eq3.1}-\eqref{eq3.2} is well-posed and \textcolor{black}{the unique solution can be represented by \eqref{eq3.3} in sense of distributions,} which satisfies $u \in W^{\alpha,q}_{loc}(\mathbb{R}^3)$ almost surely for any $q>1$ and \[
		0<\alpha< \min \left\{2-\frac{d-m}{2},2-\frac{d-m}{2}+d\left(\frac{1}{q}-\frac{1}{2}\right)\right\}.
		\]
	\end{Thm}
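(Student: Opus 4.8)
The plan is to build the solution from the outgoing volume potential and then control its regularity through a second-moment computation organized by pseudodifferential order counting. Write $\mathcal{G}$ for the Helmholtz solution operator, so that formally $u=\mathcal{G}f=-\int_{\mathbb{R}^d}\Phi(x,y)f(y)\,\d y$ with $\Phi$ the Green function in \eqref{eq3.3}; equivalently $\mathcal{G}=-R(k)$ with $R(k)=(-\Delta-k^2-\i 0)^{-1}$ the outgoing resolvent, whose kernel is exactly $\Phi$. For well-posedness I would first note uniqueness: the difference of two radiating solutions solves the homogeneous equation $\Delta w+k^2w=0$ under \eqref{eq3.2}, and since $w$ is smooth off $D$, Rellich's lemma forces $w\equiv 0$. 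For existence and the representation \eqref{explicit}, I would use that $f$ is compactly supported in $D$ with $f\in H^{\frac{m-d}{2}-\eps}(D)$ almost surely by Lemma \ref{le2.1}, and that the hypothesis $m>d-4$ gives $\frac{m-d}{2}>-2$, hence $f\in H^{-2+\delta}_{\mathrm{comp}}$ for some $\delta>0$. Since $R(k)$ is pseudolocal of order $-2$ and maps $H^{s}_{\mathrm{comp}}\to H^{s+2}_{\mathrm{loc}}$, the field $u=-R(k)f$ is well-defined; approximating $f$ by smooth compactly supported functions, solving classically through \eqref{explicit}, and passing to the limit in $H^{s+2}_{\mathrm{loc}}$ both justifies \eqref{explicit} in the distributional sense and identifies $u$ with the unique radiating solution.

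The heart of the argument is almost-sure regularity, which I would extract from a second moment rather than pathwise bounds. Fix $R$ with $D\subset\subset B_R$. Because $u=\mathcal{G}f$, the covariance operator of $u$ is $C_u=\mathcal{G}\,C_f\,\mathcal{G}^{*}$, with Schwartz kernel $K_u(x,x')=\int_D\int_D \Phi(x,y)\overline{\Phi(x',y')}\,K_f(y,y')\,\d y\,\d y'$, where $K_f$ is given by \eqref{eq2.1}. Expressing the expected norm as a trace,
\[
\mathbb{E}\,\|u\|_{H^{s}(B_R)}^{2}=\operatorname{tr}_{B_R}\!\big[(1-\Delta)^{s/2}\,C_u\,(1-\Delta)^{s/2}\big],
\]
I would count orders: $C_f$ is a classical pseudodifferential operator of order $-m$, each of $\mathcal{G},\mathcal{G}^{*}$ contributes order $-2$, so $C_u$ has order $-m-4$ and the operator under the trace has order $2s-m-4$. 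Since $f$, and hence the singular part of $C_u$, is supported in $D\subset\subset B_R$, the contribution from $B_R\setminus\bar D$ is smooth and harmless, and the trace is finite as soon as the pseudodifferential part is trace class, namely once $2s-m-4<-d$, i.e. $s<2-\frac{d-m}{2}$. Thus $\mathbb{E}\,\|u\|_{H^{s}(B_R)}^{2}<\infty$ for every such $s$; Chebyshev's inequality gives $u\in H^{s}(B_R)$ almost surely, and intersecting over a sequence $s_n\uparrow 2-\frac{d-m}{2}$ and over $R\in\mathbb{N}$ yields $u\in H^{s}_{\mathrm{loc}}(\mathbb{R}^d)$ almost surely for all $s<2-\frac{d-m}{2}$.

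It remains to pass from this $L^2$-based statement to the $W^{\alpha,q}$ scale, which I would do pathwise by Sobolev embedding on each realization. For $1<q\le 2$ the local embedding $H^{s}(B_R)\hookrightarrow W^{s,q}(B_R)$ furnishes $\alpha$ up to $2-\frac{d-m}{2}$, the first term of the minimum. For $q>2$ the embedding $H^{s}\hookrightarrow W^{s-d(\frac12-\frac1q),\,q}$ furnishes $\alpha$ up to $2-\frac{d-m}{2}+d(\frac1q-\frac12)$, the second term. Taking the smaller of the two over the two regimes reproduces exactly the stated range of $\alpha$.

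The main obstacle I anticipate is the order and trace bookkeeping for $\mathcal{G}$, since the Helmholtz operator is not elliptic: $\mathcal{G}$ is only microlocally a pseudodifferential operator of order $-2$ and, through the radiation condition, carries a Fourier-integral contribution concentrated near the characteristic set $\{|\xi|=k\}$. Here the explicit form \eqref{eq3.3} helps: near the diagonal $\Phi$ has precisely the Newtonian-potential singularity (order $-2$ smoothing), while its oscillatory tail is smooth off the diagonal and therefore contributes a bounded kernel over the bounded region $B_R$. The delicate step is to make this splitting rigorous and to confirm that the composition with the order $-m$ covariance leaves the diagonal behavior governed by the order $-m-4$, so that the trace-class threshold is indeed $s<2-\frac{d-m}{2}$; once this is secured, the remaining manipulations are routine applications of Fubini and the finiteness already established.
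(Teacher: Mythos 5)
First, a point of order: the paper does not actually prove this theorem --- it is imported verbatim from \cite{li2022far} (``the following theorem proved in [li2022far]''), so there is no internal proof to compare yours against; I can only judge your proposal on its merits and against the argument the statement's form suggests. On its merits it is correct in outline, but it is organized so that all the technical weight sits in a step the proof does not need. Your first paragraph already contains the decisive observation: Lemma \ref{le2.1} with $p=2$ gives $f\in H^{\frac{m-d}{2}-\eps}(D)$ almost surely with compact support, and the outgoing resolvent maps $H^{s}_{\mathrm{comp}}(\mathbb{R}^d)\to H^{s+2}_{\mathrm{loc}}(\mathbb{R}^d)$ for every real $s$ (split the multiplier $(|\xi|^2-k^2-\i 0)^{-1}$ into a piece supported near the characteristic sphere, which maps compactly supported distributions to entire functions, and an elliptic piece of order $-2$; or bootstrap elliptic regularity on $\Delta u=f-k^2u$). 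This already yields, pathwise, $u\in H^{s}_{\mathrm{loc}}$ almost surely for every $s<2-\frac{d-m}{2}$, and your third paragraph's embeddings then reproduce exactly the stated minimum: the first entry for $1<q\le 2$, the second for $q>2$ --- this min is precisely the fingerprint of an $L^2$ bound followed by Sobolev embedding, which is strong evidence that the cited reference argues this way too. (One micro-correction there: for $q<2$ the embedding $H^{s}\subset W^{s,q}$ on bounded domains is off by a fine index, so you should concede an arbitrarily small loss of smoothness; this is harmless since the admissible range of $\alpha$ is open.) Consequently, the entire second-moment/trace-class computation with $C_u=\mathcal{G}C_f\mathcal{G}^{*}$ is redundant: it recovers the same threshold $2s-m-4<-d$, i.e.\ $s<2-\frac{d-m}{2}$, but at the cost of the one genuinely delicate issue in your write-up, namely that $\mathcal{G}$ is not a pseudodifferential operator. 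Your sketched repair (near-diagonal expansion of $\Phi$ into a Newtonian term of order $-2$ plus a remainder, which in 3D is $(e^{\i kr}-1)/(4\pi r)$ and is only Lipschitz at the diagonal, plus a smooth off-diagonal tail) can be made rigorous, but you would still have to check that every cross term stays trace class after conjugation by $(1-\Delta)^{s/2}$ with $s$ as large as $2$, and the trace identity itself should be written with a smooth cutoff rather than a restriction to $B_R$. My recommendation: keep your first and third paragraphs, promote the resolvent mapping property to a lemma, and delete the trace argument entirely.
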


For $m>d$, by classical acoustic wave scattering theory we know that the solution of the direct problem admits a \textcolor{black}{pointwise} explicit integral representation \eqref{explicit}.
However, for $m\leq d$, we only know that \eqref{explicit} holds in sense of distributions. In the rest of this section, we show that \eqref{explicit} holds
pointwisely, which implies $u \in L^\infty(\mathbb{R}^d)$.

The following lemma is useful in the subsequent analysis. 
	\begin{Le}\label{le3.1}
		Supposing that $p_1,p_2 \in (1,d)$ and $p_1+p_2 >d$, we have that the inequalities \begin{align*}
			\int_D \frac{1}{|y-x|^{p_1}|y-z|^{p_2}}\,\mathrm{d}y \lesssim \frac{1}{|x-z|^{p_1+p_2-d}}
		\end{align*} and
\begin{align*}
			\int_D \frac{|\log|z-y||}{|y-x|^{p_1}|y-z|^{p_2}}\,\mathrm{d}y \lesssim \frac{1+|\log|x-z||}{|x-z|^{p_1+p_2-d}}
		\end{align*}
 hold for $x,z \in \mathbb{R}^d$.
	\end{Le}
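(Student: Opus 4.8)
The plan is to reduce both estimates to a single translation-and-dilation computation of a Riesz-type composition integral taken over all of $\mathbb{R}^d$. Since the integrands are nonnegative and $D \subset \mathbb{R}^d$, I would first replace $\int_D$ by $\int_{\mathbb{R}^d}$, which only enlarges the left-hand sides; it then suffices to bound the full-space integrals, and the hypotheses $p_1,p_2\in(1,d)$, $p_1+p_2>d$ are exactly what makes these converge. After a translation placing $z$ at the origin and writing $x = r e$ with $r = |x-z|$ and $|e| = 1$, the substitution $y = r w$ (so $\mathrm{d}y = r^d\,\mathrm{d}w$, $|y - x| = r|w - e|$, $|y - z| = r|w|$) extracts the expected power of $r$ and leaves a dimensionless integral.

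For the first inequality this gives
\[
\int_{\mathbb{R}^d}\frac{\mathrm{d}y}{|y-x|^{p_1}|y-z|^{p_2}} = r^{\,d - p_1 - p_2}\int_{\mathbb{R}^d}\frac{\mathrm{d}w}{|w - e|^{p_1}|w|^{p_2}} = C\, |x - z|^{\,d-p_1-p_2},
\]
where $C$ is the value of the dimensionless integral. Since $d - p_1 - p_2 = -(p_1 + p_2 - d)$, this is precisely the claimed bound, provided $C$ is finite.

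For the second inequality the same substitution turns $\log|y-z|$ into $\log(r|w|) = \log r + \log|w|$, so by the triangle inequality $|\log|y-z|| \le |\log r| + |\log|w||$ and hence
\[
\int_{\mathbb{R}^d}\frac{|\log|y-z||}{|y-x|^{p_1}|y-z|^{p_2}}\,\mathrm{d}y \le r^{\,d-p_1-p_2}\bigl(|\log r|\,C_1 + C_2\bigr),
\]
with $C_1 = \int_{\mathbb{R}^d}|w-e|^{-p_1}|w|^{-p_2}\,\mathrm{d}w$ and $C_2 = \int_{\mathbb{R}^d}|\log|w||\,|w-e|^{-p_1}|w|^{-p_2}\,\mathrm{d}w$. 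Bounding $|\log r|\,C_1 + C_2 \lesssim 1 + |\log|x-z||$ then yields the second estimate in the stated form $\frac{1+|\log|x-z||}{|x-z|^{p_1+p_2-d}}$.

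The only substantive point — and the step where all the hypotheses enter — is the finiteness of the dimensionless constants $C$, $C_1$, $C_2$. Each integrand has two possible singularities, at $w = 0$ and $w = e$, together with its behavior as $|w| \to \infty$. Local integrability near $w = e$ uses $p_1 < d$, local integrability near $w = 0$ uses $p_2 < d$, and integrability at infinity, where the integrand decays like $|w|^{-(p_1+p_2)}$, uses $p_1 + p_2 > d$; the logarithmic weight in $C_2$ is harmless because it is subpolynomial at infinity and does not disturb integrability against the strictly integrable power singularities at the two poles. I expect this verification of the three integrability regimes to be the main, albeit routine, obstacle. The restriction $p_i>1$ plays no role in these upper bounds and merely matches the range used elsewhere in the paper.
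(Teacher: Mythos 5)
Your proposal is correct and takes essentially the same approach as the paper: after translating $z$ to the origin, the paper performs the same dilation substitution $y'=y/|x|$, enlarges $D$ to $\mathbb{R}^d$, extracts the factor $|x-z|^{d-p_1-p_2}$, and relies on the finiteness of the resulting dimensionless integral, which your three-regime check (near $w=0$, near $w=e$, and at infinity) justifies. The paper dismisses the logarithmic inequality as ``proven similarly,'' so your explicit splitting $\log(r|w|)=\log r+\log|w|$ merely fills in the detail it omits.
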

	\begin{proof}[Proof]
		Without loss of gengerality, assume $z=0$. Letting $y'=\frac{y}{|x|}$ we obtain \begin{align*}
			\int_D \frac{1}{|y-x|^{p_1}|y|^{p_2}}\,\mathrm{d}y \le \int_{\mathbb{R}^d} \frac{|x|^{d-p_1-p_2}}{|y'-\hat{x}|^{p_1}|y'|^{p_2}}\,\mathrm{d}y' \lesssim \frac{1}{|x|^{p_1+p_2-d}},
		\end{align*} which proves the first inequality. The second inequality can be proven similarly.
	\end{proof}

	\begin{Le} \label{le3.2}
		For any $x \in \mathbb{R}^d$ we have $\Phi(x,\cdot) \in W^{1+\mu,p}_{loc}(\mathbb{R}^d)$ with $\mu \in (0,1)$, $1<p <\frac{d}{\mu+d-1}$.
	\end{Le}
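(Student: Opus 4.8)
The plan is to localize near the diagonal and reduce the statement to a quantitative bound on the finite differences of $\nabla_y\Phi$. Since $\Phi(x,\cdot)$ is real-analytic away from $y=x$, it suffices to control the $W^{1+\mu,p}$ norm on a fixed ball $B=B_\rho(x)$; multiplying by a cutoff $\chi\in C_0^\infty$ equal to $1$ on $B$ confines all the difficulty to the single singular point $x$. From the explicit formulas \eqref{eq3.3} together with the small-argument asymptotics of $H_0^{(1)}$ and $H_1^{(1)}$, I would first record the pointwise bounds
\[
|\nabla_y\Phi(x,y)|\lesssim |x-y|^{-(d-1)},\qquad |\nabla_y^2\Phi(x,y)|\lesssim |x-y|^{-d},
\]
valid for $y$ in a bounded neighborhood of $x$; these hold uniformly in both dimensions, the logarithmic singularity for $d=2$ and the $|x-y|^{-1}$ singularity for $d=3$ producing the same gradient size $|x-y|^{1-d}$. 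Using the Gagliardo definition $W^{1+\mu,p}=\{f\in W^{1,p}:\nabla f\in W^{\mu,p}\}$, the membership $\Phi,\nabla_y\Phi\in L^p_{\mathrm{loc}}$ is immediate because $(d-1)p<d$ is implied by the hypothesis $p<\tfrac{d}{\mu+d-1}<\tfrac{d}{d-1}$, so the only substantive point is to show $g:=\nabla_y\Phi(x,\cdot)\in W^{\mu,p}_{\mathrm{loc}}$.

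To estimate $g$ I would pass to the translation form of the Gagliardo seminorm,
\[
[g]_{W^{\mu,p}}^p=\int_{\mathbb{R}^d}\frac{1}{|h|^{d+\mu p}}\Big(\int |g(y+h)-g(y)|^p\,\mathrm{d}y\Big)\,\mathrm{d}h=:\int_{\mathbb{R}^d}\frac{\omega(h)}{|h|^{d+\mu p}}\,\mathrm{d}h,
\]
and split the inner integral according to the distance of $y$ from the singularity relative to $|h|$. On the far region $\{|y-x|>2|h|\}$ the segment from $y$ to $y+h$ stays at distance $\gtrsim|y-x|$ from $x$, so the mean value theorem and the Hessian bound give $|g(y+h)-g(y)|\lesssim |h|\,|y-x|^{-d}$; since $p>1$ the resulting radial integral is dominated by its lower endpoint and contributes $\lesssim |h|^{p}|h|^{\,d-dp}=|h|^{\,d-(d-1)p}$. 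On the near region $\{|y-x|\le 2|h|\}$ I would instead use the triangle inequality together with the size bound $|g|\lesssim|x-\cdot|^{-(d-1)}$; because $(d-1)p<d$ this term is again $\lesssim |h|^{\,d-(d-1)p}$.

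Combining the two regions yields $\omega(h)\lesssim |h|^{\,d-(d-1)p}$ for $|h|\le1$, so that
\[
\int_{|h|\le1}\frac{\omega(h)}{|h|^{d+\mu p}}\,\mathrm{d}h\lesssim\int_{|h|\le1}|h|^{-(d-1)p-\mu p}\,\mathrm{d}h=\int_0^1 s^{\,d-1-(d-1+\mu)p}\,\mathrm{d}s,
\]
which is finite exactly when $p<\tfrac{d}{\mu+d-1}$, i.e.\ precisely under the stated hypothesis; the tail $|h|>1$ is harmless after the cutoff since $\omega(h)\lesssim\|\chi g\|_{L^p}^p$ and $\int_{|h|>1}|h|^{-d-\mu p}\,\mathrm{d}h<\infty$. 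This would give $g\in W^{\mu,p}_{\mathrm{loc}}$ and hence $\Phi(x,\cdot)\in W^{1+\mu,p}_{\mathrm{loc}}$.

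The hard part is the near-diagonal region and the extraction of the sharp exponent: one must verify that both the mean value estimate in the far zone (where $p>1$ enters through the lower-endpoint behavior) and the crude size estimate in the near zone produce the \emph{same} power $|h|^{\,d-(d-1)p}$, so that the borderline for the final $h$-integration is governed solely by the combination $(d-1+\mu)p<d$. A secondary technical point is to confirm that the logarithmic case $d=2$ obeys the same gradient and Hessian bounds as the $d=3$ case, so that no separate argument is needed; alternatively one could invoke the standard characterization of the fractional regularity of homogeneous singularities, or package the near-zone estimate through Lemma~\ref{le3.1}, but the finite-difference computation above seems the most transparent.
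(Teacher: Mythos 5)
Your proof is correct, and it takes a genuinely different route from the paper's. The paper works with the Slobodeckij double-integral seminorm directly and reduces everything to the single most singular term (for $d=3$, essentially $(x_i-y_i)/|x-y|^3$): it splits off the factor $e^{ik|x-y|}$ and then the factor $(x_i-y_i)/|x-y|$ by two applications of the fractional Leibniz (Kato--Ponce) rule, handles the smooth-amplitude factor by Sobolev embedding from $W^{1,p'}$, and controls the remaining seminorm of $|x-y|^{-2}$ by the elementary difference inequality together with the convolution estimate of Lemma~\ref{le3.1}; the case $d=2$ is only sketched via the small-argument asymptotics of $H_0^{(1)}$. You instead use the translation form of the Gagliardo seminorm for the full gradient $\nabla_y\Phi$, split the inner integral into the near zone $\{|y-x|\le 2|h|\}$ (size bound $|x-y|^{-(d-1)}$) and the far zone (mean value theorem with the Hessian bound $|x-y|^{-d}$), and observe that both zones produce the same power $|h|^{d-(d-1)p}$, so the final $h$-integration converges exactly when $(d-1+\mu)p<d$. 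Your argument is more elementary and self-contained --- it needs no fractional Leibniz rule, no Sobolev embedding, and not even Lemma~\ref{le3.1} --- and it treats $d=2$ and $d=3$ uniformly, since the logarithmic kernel obeys the same gradient and Hessian bounds; it also makes the origin of the sharp exponent $p<\tfrac{d}{\mu+d-1}$ completely transparent. What the paper's factorization buys in exchange is that it isolates regularity statements for the individual factors (e.g., \eqref{eq3.6}, \eqref{eq3.7}) that are reused conceptually elsewhere, and it recycles Lemma~\ref{le3.1}, which the paper needs again later anyway. The only details you leave implicit --- that the cutoff commutes harmlessly with the seminorm (the cross term $|\Delta_h\chi|\,|g|$ contributes $|h|^p\|g\|_{L^p}^p$, which is integrable against $|h|^{-d-\mu p}$ since $\mu<1$), and the vector-valued mean value inequality along the segment --- are routine at the level of rigor of the paper's own proof.
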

		
	\begin{proof} 
		Consider the Slobodeckij semi-norm \[
		|u|^p_{W^{\mu,p}(D)} := \int_D \int_D \frac{|u(x)-u(y)|^p}{|x-y|^{p\mu+d}} \,\mathrm{d}x\,\mathrm{d}y.
		\] The norm of $W^{1+\mu,p}(D)$ can be expressed by \[
		\|u\|^p_{W^{1+\mu,p}(D)}:=\|u\|^p_{W^{1,p}(D)} + \sum_{i=1}^d |\partial_i u|^p_{W^{\mu,p}(D)}.
		\] Without loss of generality, we assume  $x \in D \subset \subset \mathbb{R}^d$.
				Firstly we consider the case when $d=3$. Notice \[
		\Phi(x,y)=\frac{e^{ik|x-y|}}{4\pi |x-y| },\quad \partial_i\Phi(x,y)=e^{ik|x-y|}\frac{ik(y_i-x_i)|x-y|-(y_i-x_i)}{4\pi|x-y|^3},
		\] which gives $\Phi(x,\cdot) \in W^{1,p}(D)$ with $1< p <\frac{3}{2}$. Therefore, we only need to show \begin{align*}
			|\partial_j \Phi(x,y)|^p_{W^{\mu,p}(D)} < \infty, \quad j=1,2...d,
		\end{align*} for $0<\mu<1 $ and $1<p< \frac{3}{2+\mu}$. 
		Clearly, we only need to consider the term which has higher singularity. Hence, it suffices to show  \[
		e^{ik|x-y|}\frac{y_j-x_j}{|x-y|^3} \in W^{\mu,p}(D)
		\]for $0<\mu<1 $ and $1<p< \frac{3}{2+\mu}$. Using the fractional Leibniz rule in \cite{grafakos2014kato} gives \begin{align} 
			\left|e^{ik|x-y|}\frac{y_j-x_j}{|x-y|^3}\right|_{W^{\mu,p}(D)} &\lesssim \|e^{ik|x-y|}\|_{L^\infty(D)}\left|\frac{y_j-x_j}{|x-y|^3}\right|_{W^{\mu,p}(D)} + |e^{ik|x-y|}|_{W^{\mu,\infty(D)}} \left\|\frac{y_j-x_j}{|x-y|^3}\right\|_{L^{p}(D)}. \label{eq3.4}
		\end{align} It is straightforward to verify $e^{ik|x-y|} \in W^{1,\infty}(D)$, which implies \begin{equation}\label{eq3.5}
			\|e^{ik|x-y|}\|_{L^\infty(D)}<\infty,\quad  |e^{ik|x-y|}|_{W^{\mu,\infty(D)}}< \infty.
		\end{equation} Combining \eqref{eq3.4}--\eqref{eq3.5}, we only need to prove \[
		\frac{x_i-y_i}{|x-y|^3} \in W^{\mu,p}(D)
		\] for $0<\mu<1 $ and $1<p< \frac{3}{2+\mu}$.
		It is easy to verify \[
		\frac{x_i-y_i}{|x-y|} \in W^{1,p'}(D),
		\] where $1<p'<3$. 
		Applying Sobolev embedding theorem yields \begin{equation} \label{eq3.6}
			\frac{x_i-y_i}{|x-y|} \in W^{\mu,q}(D),
		\end{equation} where $0<\mu<1$ and $q<\frac{3}{\mu}$. 
		Next we will show that \begin{equation} \label{eq3.7}
			\frac{1}{|x-y|^2} \in W^{\mu,p}(D) 
		\end{equation} for $0<\mu<1$ and $1<p<\frac{3}{2+\mu}$. To this end, direct calculations imply \begin{align}
			&\int_D \int_D \frac{\left| |x-y|^{-2}-|x-z|^{-2} \right|^p}{|y-z|^{p\mu+3}} \,\mathrm{d}y\mathrm{d}z \lesssim \int_D \int_D \frac{(|x-z|^p+|x-y|^p)|z-y|^p}{|y-z|^{p\mu+3}|x-z|^{2p}|x-y|^{2p}} \,\mathrm{d}y\mathrm{d}z \notag \\  = & \int_D \int_D |y-z|^{-(p(\mu-1)+3)}|x-y|^{-p}|x-z|^{-2p}\,\mathrm{d}y\mathrm{d}z  +\int_D \int_D |y-z|^{-(p(\mu-1)+3)}|x-z|^{-p}|x-y|^{-2p}\,\mathrm{d}y\mathrm{d}z. \label{eq3.8}
		\end{align}
		Applying Lemma \ref{le3.1} gives \begin{align} \label{eq3.9}
			\int_D \int_D |y-z|^{-(p(\mu-1)+3)}|x-y|^{-p}|x-z|^{-2p}\,\mathrm{d}y\mathrm{d}z \lesssim \int_D \frac{1}{|x-z|^{2p+\mu p}}< \infty,
		\end{align} when $0<\mu<1 $ and $p<\frac{3}{2+\mu}$. Inserting \eqref{eq3.9} into \eqref{eq3.8} gives \eqref{eq3.7}. 
		
		Applying fractional Leibniz rule yields \begin{align}
			\left|\frac{x_i-y_i}{|x-y|^3}\right|_{W^{\mu,p}(D)} \lesssim \left\|\frac{x_i-y_i}{|x-y|}\right\|_{L^\infty(D)}\left|\frac{1}{|x-y|^2}\right|_{W^{\mu,p}(D)} + \left|\frac{x_i-y_i}{|x-y|}\right|_{W^{\mu, q}(D)}\left|\frac{1}{|x-y|^2}\right|_{L^{q'}(D)}, \label{eq3.10}
		\end{align} where $\frac{1}{q'}+\frac{1}{q}=\frac{1}{p}$. We have known that \begin{equation} \label{eq3.11}
			\left|\frac{1}{|x-y|^2}\right|_{W^{\mu,p}(D)}<\infty,
		\end{equation} for $0<\mu<1$, $1<p<\frac{3}{2+\mu}$ and\begin{equation} \label{eq3.12}
			\left|\frac{x_i-y_i}{|x-y|}\right|_{W^{\mu, q}(D)} < \infty
		\end{equation} for $0<\mu<1$, $1<q<\frac{3}{\mu}$. Obviously, it can be verified \begin{equation} \label{eq3.13}
			\left|\frac{1}{|x-y|^2}\right|_{L^{q'}(D)}< \infty
		\end{equation} with $1<q'<\frac{3}{2}$. Hence, we can choose $1<q<\frac{3}{\mu}$ and $1<q'<\frac{3}{2}$ which gives \[
		p=\frac{1}{q^{-1}+q'^{-1}}< \frac{3}{2+\mu}.
		\] Then by combining \eqref{eq3.10}--\eqref{eq3.13} we arrive at \begin{align*}
			\left|\frac{x_i-y_i}{|x-y|^3}\right|_{W^{\mu,p}(D)} < \infty,
		\end{align*} which completes the proof when $d=3$. 
		
		When $d=2$, the discussion is analogous as $d=3$ by noticing the asymptotic relations \[
		H^{(1)}_0(t)=\frac{2i}{\pi}\log{\frac{t}{2}}+O(1)
		\] and \[
		H^{(1)}_0(t)=-\frac{2i}{\pi t}+O(t)
		\] when $t \to 0$.  We omit it for brevity.
	\end{proof}
	
 \textcolor{black}{Combining Lemma \ref{le2.1} and \ref{le3.2}, we have the following regularity result.}
	\begin{Thm}\label{thm3.2}
	Let $f$ be a microlocally isotropic Gaussian random function of order $m \in (d-4, d]$.
	The solution to the direct scattering problem admits the following representation which holds pointwisely
	\begin{equation} \label{eq3.2.1}
		u(x)=-\int_{\mathbb{R}^d} \Phi(x,y) f(y)\,\mathrm{d}x.
	\end{equation} Moreover, $u \in L^\infty(\mathbb{R}^d)$ almost surely.
	\end{Thm}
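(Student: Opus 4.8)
The plan is to read the integral in \eqref{eq3.2.1} as the duality pairing $\langle f,\Phi(x,\cdot)\rangle$ and to show, using the regularity of $f$ from Lemma \ref{le2.1} together with that of the Green function from Lemma \ref{le3.2}, that this pairing is well defined for every $x$ and bounded uniformly in $x$. Since Theorem \ref{lax} already guarantees that \eqref{eq3.2.1} holds in the sense of distributions, once the right-hand side is shown to define a (locally) bounded function of $x$, the distributional solution $u$ must coincide with it, which yields simultaneously the pointwise identity and the membership $u\in L^\infty(\mathbb{R}^d)$.

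First I would fix $x\in\mathbb{R}^d$ and a cutoff $\chi\in C_0^\infty(\mathbb{R}^d)$ with $\chi\equiv 1$ on a neighborhood of $\bar D$. Since $\text{supp}\,f\subset\subset D$, one has $\langle f,\Phi(x,\cdot)\rangle=\langle f,\chi\Phi(x,\cdot)\rangle$, so it suffices to control the pairing of $f$ against the compactly supported function $\chi\Phi(x,\cdot)$. By Lemma \ref{le2.1}, $f\in W^{\sigma,r}(\mathbb{R}^d)$ with $\sigma=\frac{m-d}{2}-\epsilon$ for every $\epsilon>0$ and every $r\in(1,\infty)$, while Lemma \ref{le3.2} gives $\chi\Phi(x,\cdot)\in W^{1+\mu,s}(\mathbb{R}^d)$ for $\mu\in(0,1)$ and $s\in(1,\frac{d}{\mu+d-1})$.

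The core is the index matching. Taking the integrability exponents dual, $r'=s$ and $r=\frac{s}{s-1}$, the embedding $W^{1+\mu,s}\hookrightarrow W^{-\sigma,s}$ holds provided $1+\mu\ge -\sigma=\frac{d-m}{2}+\epsilon$. Because $m>d-4$ we have $\frac{d-m}{2}<2$, so writing $\frac{d-m}{2}=2-\delta$ with $\delta>0$ one may pick $\epsilon<\delta$ and $\mu\in(1-\delta+\epsilon,\,1)$; the corresponding $r=\frac{s}{s-1}$ lies in $(1,\infty)$, an admissible exponent in Lemma \ref{le2.1}. The duality between $W^{\sigma,r}$ and $W^{-\sigma,r'}$ then gives
\[
|\langle f,\Phi(x,\cdot)\rangle|\le \|f\|_{W^{\sigma,r}(\mathbb{R}^d)}\,\|\chi\Phi(x,\cdot)\|_{W^{-\sigma,r'}(\mathbb{R}^d)}<\infty,
\]
so the right-hand side of \eqref{eq3.2.1} is finite for each $x$. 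To make the bound uniform, for $x$ in a fixed ball $B_{R_0}$ with $D\subset\subset B_{R_0}$ I would note that $\Phi(x,y)$ depends on $x,y$ only through $|x-y|$, so the estimates in the proof of Lemma \ref{le3.2} are translation invariant and give $\sup_{x\in B_{R_0}}\|\chi\Phi(x,\cdot)\|_{W^{1+\mu,s}}<\infty$; for $x\notin B_{R_0}$ the kernel $\Phi(x,\cdot)$ is smooth on $\bar D$ with $|\Phi(x,y)|$ decaying as $|x|\to\infty$, so the pairing is bounded there as well. Combining the two regimes yields $\sup_x|u(x)|<\infty$, and continuity of $x\mapsto\Phi(x,\cdot)$ into the relevant Sobolev space identifies this bounded function with the distributional solution, establishing \eqref{eq3.2.1} pointwise and $u\in L^\infty(\mathbb{R}^d)$ almost surely.

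I expect the main obstacle to be the index matching in the roughest regime $m\downarrow d-4$: there $f$ carries almost two negative derivatives, so the pairing requires $\Phi(x,\cdot)$ to possess almost two derivatives in a suitable $L^s$ scale—precisely the content of Lemma \ref{le3.2} with $\mu\uparrow 1$—and the dual exponent $r$ is forced to be large, which is exactly why the full range $r\in(1,\infty)$ in Lemma \ref{le2.1} is needed. A secondary technical point is upgrading pointwise finiteness to a uniform-in-$x$ bound, for which the translation structure of the Green function is the essential ingredient.
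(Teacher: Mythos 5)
Your proposal is correct and follows essentially the same route as the paper: both estimate the pairing $\langle f,\Phi(x,\cdot)\rangle$ by Sobolev duality, matching the $W^{1+\mu,p}$ regularity of the Green function from Lemma \ref{le3.2} against the $W^{\frac{m-d}{2}-\epsilon,p'}$ regularity of $f$ from Lemma \ref{le2.1} (the condition $1+\mu\ge\frac{d-m}{2}+\epsilon$, available precisely because $m>d-4$), and both obtain the uniform bound by splitting into $x\in B_{R_0}$, handled via translation invariance of $\Phi$, and $x\notin B_{R_0}$, handled via smoothness and boundedness of the kernel on $\bar D$. The only cosmetic difference is which factor carries the negative index in the duality, which is immaterial.
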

	\begin{proof}
	   For any fixed $x \in \mathbb{R}^d$, combining Lemma \ref{le2.1} and \ref{le3.2} gives that \eqref{eq3.2.1} is well-defined, i.e. \begin{align}\label{eq3.2.2}
	       |u(x)| \le \|\Phi(x,\cdot)\|_{W^{1+\mu,p}(D)}\|f\|_{W^{-(1+\mu),p'}(D)}<\infty,
	   \end{align} with \[0<\mu<1,\quad 1<p<\frac{d}{\mu+d-1} \quad \text{\rm and}\quad \frac{1}{p}+\frac{1}{p'}=1.\] Now we show $u \in L^\infty(\mathbb{R}^d)$. Recalling \eqref{eq3.3}, there exists a sufficient large $R_0$ such that $D \subset B_{R_0}  $ and \[
    |\Phi(x,y)| \le C,\quad |\nabla_y\Phi(x,y)| \le C \quad\text{\rm and} \quad |\nabla_y^2\Phi(x,y)| \le C,\quad x \in \mathbb{R}^d\backslash B_{R_0},\, y\in D, \] where $C$ is a constant independent of $x$ and $y$. Hence for any $ x \in \mathbb{R}^d\backslash B_{R_0}$, there holds \begin{align} \label{eq3.2.3}
        \|\Phi(x,\cdot)\|_{W^{1+\mu,p}(D)} \le \|\Phi(x,\cdot)\|_{W^{2,p}(D)} \le C,
    \end{align}where $C$ is a constant independent of $x$. For any $x \in B_{R_0}$, we have \begin{align}\label{eq3.2.4}
        \|\Phi(x,\cdot)\|_{W^{1+\mu,p}(D)} \le \|\Phi(x,\cdot)\|_{W^{1+\mu,p}(B_{R_0})} \le \|\Phi(x,\cdot)\|_{W^{1+\mu,p}(B_{2R_0)}}=\|\Phi(0,\cdot)\|_{W^{1+\mu,p}(B_{2R_0})}.
    \end{align}
    Combining \eqref{eq3.2.2}--\eqref{eq3.2.4} we complete the proof.
	\end{proof}
	
It can be verified that using the regularity result in Theorem \ref{thm4.1} and Sobolev embedding theorem one can only have $u \in L^\infty(\mathbb{R}^d)$ when $m>2(d-2)$. Hence, the above theorem enhances the regularity result in Theorem \ref{thm4.1}.
 
	\section{Inverse problem using near-field data}\label{near}
	 
	In this section, we derive the stability estimate by near-field data. Denote $a(x,\xi)=c(x,\xi)-h(x)|\xi|^{-m}$.
	 Assume that $f$ satisfies the following assumption. 
	
	\emph{Assumption (A). The random source $f$ is a real-valued microlocally isotropic Gaussian random function of order $m>d-1$. The covariance operator has the symbol $c(x,\xi)$ with the principal symbol $h(x)|\xi|^{-m}$ satisfying (i) $|h(x)| \le M$ for $x \in D$; (ii) $|c(x, \xi)| \le M(1+|\xi|)^{-m}$ for $\xi \in \mathbb{R}^d$ and $x \in D$; (iii) $|a(x, \xi)| \le M |\xi|^{-(m+1)}$ for $|\xi| \ge 1$ and $x \in D$}. Here $M>0$ stands for a constant.

	We first give a bound with a high frequency tail for the Fourier transform of $h$ \textcolor{black}{in terms of the source function.}
	Recalling the definitions in Section \ref{pre}, the covariance of $\hat{f}$ can be expressed by 
	\begin{align*}
		\mathbb{E}(\hat{f}(\xi)\hat{f}(\eta)) &=\mathbb{E}\left(\int_{B_R}\int_{B_R} f(x)f(y) e^{-i \xi \cdot x}e^{-i \eta \cdot y}\,\mathrm{d}x\mathrm{d}y\right) =\int_{B_R}\int_{B_R} K_f(x,y) e^{-i \xi \cdot x}e^{-i \eta \cdot y}\,\mathrm{d}x\mathrm{d}y \\ &=(2\pi)^{-d}\int_{B_R}\int_{B_R}\int_{\mathbb{R}^d}e^{-i (\xi \cdot x + \eta \cdot y)}e^{i(x-y)\cdot \zeta}c(x,\zeta)\,\mathrm{d}\zeta\mathrm{d}x\mathrm{d}y\\ &=\int_{B_R}\int_{\mathbb{R}^d}e^{i x \cdot (\zeta-\xi)}c(x,\zeta) \delta(\zeta+\eta)\,\mathrm{d}\zeta\mathrm{d}x = \int_{B_R}e^{-i x \cdot (\eta+\xi)} c(x,  -\eta) \,\mathrm{d}x \\ &=\int_{B_R}e^{-i x \cdot (\eta+\xi)} h(x)|\eta|^{-m} \,\mathrm{d}x + MR^dO(|\eta|^{-(m+1)}) \notag\\ &=|\eta|^{-m}\hat{h}(\xi+\eta) + MR^dO(|\eta|^{-(m+1)}).
	\end{align*} Therefore, we obtain \[
	\hat{h}(\xi+\eta)=|\eta|^{m}\mathbb{E}(\hat{f}(\xi)\hat{f}(\eta))+ MR^d O\left(\frac{1}{|\eta|}\right).
	\] Taking $\xi=k \theta_1$ and $\eta=k \theta_2$ with $\theta_1,\, \theta_2 \in \mathbb{S}^{d-1}$, one arrives at  \begin{equation} \label{eq4.3}
		\hat{h}(k(\theta_1+\theta_2))=|k|^{m}\mathbb{E}(\hat{f}(k\theta_1)\hat{f}(k\theta_2))+MR^dO\left(\frac{1}{|k|}\right).
	\end{equation} 
	
	\textcolor{black}{Next, we bound the term $\mathbb{E}(\hat{f}(k\theta_1)\hat{f}(k\theta_2))$ by the correlation data on $\partial B_R$.}
	Multiplying the governing equation \eqref{eq3.1} by the plane wave $e^{-ik \theta_j \cdot x}$ and integrating by parts yield \begin{align} \label{eq4.4}
		\int_{B_R} f e^{-ik \theta_j \cdot x}\,\mathrm{d}x=\int_{\partial B_R} \partial_\nu u(x) e^{-ik \theta_j \cdot x}-\partial_\nu(e^{-ik \theta_j \cdot x}) u(x)\,\mathrm{d}x.
	\end{align} Taking expectation gives that \begin{align}
		\mathbb{E}(\hat{f}(k\theta_1)\hat{f} (k \theta_2)) &= \int_{\partial B_R}\int_{\partial B_R} \mathbb{E}(\partial_\nu u(x) \partial_\nu u(y)) e^{-ik(\theta_1 \cdot x+\theta_2 \cdot y)}\,\mathrm{d}s(x) \mathrm{d}s(y) \notag\\ &\quad-\int_{\partial B_R}\int_{\partial B_R} \mathbb{E}(\partial_\nu u(x)  u(y)) e^{-ik(\theta_1 \cdot x+\theta_2 \cdot y)}(-ik \theta_2 \cdot \hat{y})\,\mathrm{d}s(x) \mathrm{d}s(y) \notag\\ &\quad-\int_{\partial B_R}\int_{\partial B_R} \mathbb{E}(\partial_\nu u(y)  u(x)) e^{-ik(\theta_1 \cdot x+\theta_2 \cdot y)}(-ik \theta_1 \cdot \hat{x})\,\mathrm{d}s(x) \mathrm{d}s(y) \notag \\ &\quad- \int_{\partial B_R}\int_{\partial B_R} \mathbb{E}( u(x)  u(y)) e^{-ik(\theta_1 \cdot x+\theta_2 \cdot y)}k^2( \theta_2 \cdot \hat{y})( \theta_1 \cdot \hat{x})\,\mathrm{d}s(x) \mathrm{d}s(y) \notag\\ &\quad:= \sum_{j=1}^4 I_j(k, \theta_1, \theta_2), \label{eq4.5}
	\end{align} where $\hat{x}=x/|x|$. By inserting \eqref{eq4.5} into \eqref{eq4.3} and noticing 
	\[
	\textcolor{black}{\{\theta: \theta =\theta_1 + \theta_2, \theta_1, \theta_2\in\mathbb S^{d-1}\} = \{\theta\in\mathbb R^d: |\theta|\leq 2\},}
	\]
	we derive the inequality \begin{align} \label{eq4.6}
		|\hat{h}(\xi)|^2 \lesssim \sup_{\theta_1, \theta_2 \in \mathbb{S}^{d-1}} k^{2m} |\sum_{j=1}^4 I_j(k, \theta_1, \theta_2) |^2 + \frac{M^2R^{2d}}{k^2},
	\end{align} which holds for all $|\xi|\le 2k$. For convenience, denote \[
	\epsilon^2(k, \theta_1, \theta_2):= \sum_{j=1}^4 |I_j(k, \theta_1, \theta_2)|^2
	\] with $k>0$ and $\theta_1, \theta_2 \in \mathbb{S}^{d-1}$. From the equation \eqref{eq3.1} and the Sommerfeld radiation condition \eqref{eq3.2}, it can be verified that \[
	u(x; -k)=\overline{u(x;k)},\quad k \in \mathbb{R},
	\] where the notation $u(x;k)$ is used to exhibit the dependence of the solution on the wavenumber $k$. Therefore, the definition of $\epsilon^2(k, \theta_1, \theta_2)$ can be extended to $\mathbb{C}$ as
	\[
	\epsilon^2(k, \theta_1, \theta_2):= \textcolor{black}{\sum_{j=1}^4 I_j(k, \theta_1, \theta_2) I_j(-k, \theta_1, \theta_2).}
	\] Consider the multi-frequency data characterized as \[
	\epsilon^2= \sup_{k \in [0,K], \theta_1, \theta_2 \in \mathbb{S}^{d-1}} \epsilon^2(k, \theta_1, \theta_2),
	\] where $K>0$ is the upper bound of the frequency. Denote a sectorial domain by $\mathcal{R}=\{z\in \mathbb{C}: |\arg z| < \pi/4\}$.
	In what follows, we show that $\epsilon^2(k,\theta_1,\theta_2)$ is analytic and has an upper bound for $k \in \mathcal{R}$. 
\textcolor{black}{We only consider the term $I_1$ since the discussions for $I_2,I_3,I_4$ are similar. Recalling \eqref{eq3.2.1}, we deduce \begin{align*}
		&I_1(k,\theta_1,\theta_2) \\ &=\int_{\partial B_R}\int_{\partial B_R} \mathbb{E}( u(x)  u(y)) e^{-ik(\theta_1 \cdot x+\theta_2 \cdot y)}\,\mathrm{d}s(x) \mathrm{d}s(y) \\ &= \int_{\partial B_R}\int_{\partial B_R}\int_{D}\int_{D} J(x,y,\tau,t;k) \mathbb{E}(f(\tau)f(t))\,\mathrm{d}\tau\mathrm{d}t\mathrm{d}s(x) \mathrm{d}s(y)\\ &= \int_{\partial B_R}\int_{\partial B_R}\int_{D}\int_{D}  J(x,y,\tau,t;k)K_f(\tau,t)\,\mathrm{d}\tau\mathrm{d}t\mathrm{d}s(x) \mathrm{d}s(y),
	\end{align*} where we denote  \[
	J(x,y,\tau,t;k):=\partial_{\nu(x)} \Phi(x,\tau) \partial_{\nu(y)} \Phi(y,t)e^{-ik(\theta_1 \cdot x+\theta_2 \cdot y)}.
	\]  Obviously, $J$ is analytic with respective to $k \in \mathcal{R}$. In order to show $I_1$ is also analytic,}  we shall verify the following estimates \begin{align} \label{eq4.7}
		&\int_{\partial B_R}\int_{\partial B_R}\int_{D}\int_{D} |K_f(\tau,t)J(x,y,\tau,t;k)|\,\mathrm{d}\tau\mathrm{d}t\mathrm{d}s(x) \mathrm{d}s(y) < \infty\\
 \label{eq4.8}
		&\int_{\partial B_R}\int_{\partial B_R}\int_{D}\int_{D} |K_f(\tau,t)\partial_k J(x,y,\tau,t;k)|\,\mathrm{d}\tau\mathrm{d}t\mathrm{d}s(x) \mathrm{d}s(y) < \infty
	\end{align}
	which hold uniformly with respect to $k\in \mathcal{R}$. As a consequence, the derivative can be taken under the integral.
	We only prove \eqref{eq4.7}, since \eqref{eq4.8} can be proven similarly.  It is necessary to discuss the singularity of $K_f(\tau,t)$. Inspired by \cite{caro2019inverse}, we show in the following lemma that the kernel $K_f(x,y)$ can be represented as the sum of a singular part and a continuous remainder which is bounded under \emph{assumption (A)}.  
	\begin{Le}\label{le4.1}
		Let the random function $f$ satisfy assumption (A). The covariance function $K_f$ has the following form:
		
		(i)  If $a<m-(d-1)\le a+1$ with $a=1,2,3...$,  \[
		K_f(x, y)= ch(x)|x-y|^{m-d} +F_m(x,y)
		\] where $c$ is a constant dependent on $m,d$ and $F_m(x,y) \in C^{a,\alpha}(\mathbb{R}^d \times \mathbb{R}^d)$ with $\alpha \in (0, m-d-a+1)$.
		
		(ii)
		If $d-1<m< d$,  \[
		K_f(x, y)= ch(x)|x-y|^{m-d} +F_m(x,y)
		\] where $c$ is a constant dependent on $m,d$ and $F_m(x,y) \in C^{0,\alpha}(\mathbb{R}^d \times \mathbb{R}^d)$ with $\alpha \in (0, m-d+1)$. 
		
		(iii)
		If $m=d$,  \[
		K_f(x, y)= ch(x)\log{|x-y|} +F_m(x,y)
		\] where $c$ is a constant dependent on $d$ and $F_m(x,y) \in C^{0,\alpha}(\mathbb{R}^d \times \mathbb{R}^d)$ with $\alpha \in (0, 1)$. 
		
		For all of the above three cases, we have \[
		\|F_m\|_{L^\infty(D \times D)} \lesssim M. 
		\]
		
	\end{Le}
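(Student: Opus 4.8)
The plan is to work directly from the oscillatory–integral representation \eqref{eq2.1} of the kernel and to split the symbol into its principal part and a faster–decaying remainder, so that the singular profile is produced by the Fourier transform of the homogeneous piece $h(x)|\xi|^{-m}$ and the Hölder-regular remainder $F_m$ is produced by the lower–order symbol $a(x,\xi)=c(x,\xi)-h(x)|\xi|^{-m}$. Concretely, fix a cutoff $\chi\in C^\infty(\mathbb{R}^d)$ with $\chi\equiv 0$ on $\{|\xi|\le 1\}$ and $\chi\equiv 1$ on $\{|\xi|\ge 2\}$, set $r(x,\xi):=c(x,\xi)-h(x)|\xi|^{-m}\chi(\xi)$, and write
\begin{align*}
K_f(x,y)=(2\pi)^{-d}h(x)\int_{\mathbb{R}^d}e^{i(x-y)\cdot\xi}|\xi|^{-m}\chi(\xi)\,\mathrm{d}\xi+(2\pi)^{-d}\int_{\mathbb{R}^d}e^{i(x-y)\cdot\xi}r(x,\xi)\,\mathrm{d}\xi.
\end{align*}
On $\{|\xi|\ge2\}$ one has $r=a$, so assumption (A)(iii) gives $|r|\lesssim M|\xi|^{-(m+1)}$; on $\{|\xi|\le 2\}$, assumption (A)(ii) gives $|r|\lesssim M$. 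Thus $r$ is bounded near the origin and decays like $|\xi|^{-(m+1)}$ at infinity.

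For the first term I would invoke the classical formula for the (inverse) Fourier transform of the homogeneous Riesz distribution $|\xi|^{-m}$ (Hörmander, Theorem 7.1.18, or Gelfand–Shilov): for $m-d\notin 2\mathbb{Z}_{\ge0}$ it equals $c_{m,d}|z|^{m-d}$, while for $m=d$ a logarithmic term $c_d\log|z|$ appears instead. The cutoff removes the low-frequency piece $|\xi|^{-m}(1-\chi(\xi))$, supported in $\{|\xi|\le 2\}$; for $m<d$ this piece is integrable and its transform is entire in $z$, hence smooth, while for $m\ge d$ the origin must be handled by the finite-part/analytic-continuation definition of $|\xi|^{-m}$, the low-frequency discrepancy again being smooth. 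Consequently the first term equals $c\,h(x)|x-y|^{m-d}$ in cases (i)–(ii) and $c\,h(x)\log|x-y|$ in case (iii), with $c$ depending only on $m,d$, up to a smooth correction that I absorb into $F_m$. The logarithm arises exactly at $m=d$, which is what singles out case (iii).

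To control $F_m(x,y)=(2\pi)^{-d}\int_{\mathbb{R}^d}e^{i(x-y)\cdot\xi}r(x,\xi)\,\mathrm{d}\xi$ (now including the smooth low-frequency leftovers) I would count how many $z=x-y$ derivatives may be passed under the integral: each derivative brings down a factor $|\xi|$, and $\int_{|\xi|\ge2}|\xi|^{j}|\xi|^{-(m+1)}\,\mathrm{d}\xi<\infty$ precisely when $j<m+1-d$. Since $m>d-1$, in case (i) the hypothesis $a<m-(d-1)\le a+1$ is exactly $a<m+1-d\le a+1$, so we may differentiate $a$ times; a standard Hölder estimate on the resulting oscillatory integral (splitting the $\xi$–range at $|\xi|\sim|z_1-z_2|^{-1}$) then gives $\partial^a_z F_m\in C^{0,\alpha}$ with $\alpha<m-d-a+1$, i.e. $F_m\in C^{a,\alpha}$; the cases $j=0$ give (ii) and (iii). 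Regularity in $x$ follows from the same count applied to $\partial_x^\beta r$, which enjoys the same decay because $c$ is a classical symbol. Finally, the uniform bound $\|F_m\|_{L^\infty(D\times D)}\lesssim M$ is immediate from $\int_{\mathbb{R}^d}|r(x,\xi)|\,\mathrm{d}\xi\lesssim M\big(\int_{|\xi|\le2}1\,\mathrm{d}\xi+\int_{|\xi|\ge2}|\xi|^{-(m+1)}\,\mathrm{d}\xi\big)\lesssim M$, using $m+1>d$.

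The main obstacle is the rigorous treatment of $|\xi|^{-m}$ at the origin when $m\ge d$, where it ceases to be locally integrable: one must define it by analytic continuation in $m$ (equivalently, absorb the discrepancy between $|\xi|^{-m}\chi$ and $|\xi|^{-m}$ into the smooth remainder) and verify that the exceptional logarithmic homogeneity occurs exactly at $m=d$, yielding (iii) rather than a pure power. The second delicate point is the sharp bookkeeping of Hölder exponents near the borderline $j=m+1-d$, which is what guarantees that $F_m$ gains precisely one order of regularity over the leading profile $|x-y|^{m-d}$ and thereby separates cleanly the singular term from the $C^{a,\alpha}$ remainder.
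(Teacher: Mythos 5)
Your overall strategy is the same as the paper's: truncate the symbol at high frequencies, let the Fourier transform of the truncated principal part $h(x)|\xi|^{-m}$ produce the singular profile, and push everything else (low--frequency part of $c$, high--frequency part of $a$, truncation discrepancies) into $F_m$. The difference lies in how the two key steps are executed. For the remainder, you differentiate under the integral and obtain H\"older continuity by splitting the frequency integral at $|\xi|\sim|z_1-z_2|^{-1}$; the paper instead bounds $\|v_2(x,\cdot)\|_{W^{s,p}(\mathbb{R}^d)}$ by the Hausdorff--Young inequality and then invokes the Sobolev embedding $W^{s,p}\subset C^{k_0,\alpha}$. Both routes give the same exponents; yours is more elementary, the paper's converts symbol decay into regularity in one stroke. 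For the principal part you cite the classical Fourier transform of the Riesz distribution (H\"ormander/Gelfand--Shilov), with the low-frequency discrepancy handled as a compactly supported distribution whose transform is entire; the paper derives everything by hand --- a scaling identity for $m>d$ (case (i)), the Gaussian-subordination identities \eqref{eq4.18}--\eqref{eq4.19} for $d-1<m<d$ (case (ii)), and a limit $m\to d$ for case (iii). Your citation-based route is actually the more reliable one for case (i): the paper's scaling argument sets $c_1=\mathcal{F}^{-1}((1-\chi)|\cdot|^{-m})(\hat y)$, which is \emph{not} the Riesz constant, so its ``smooth'' remainder $g_1$ in fact still carries a nonzero multiple of $|y|^{m-d}$; your argument identifies the constant correctly.

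The genuine gap in your write-up is the set of orders $m-d\in\{2,4,6,\dots\}$, which \emph{are} inside case (i) of the lemma (for instance $m=d+2$ forces $a=2$). Your own citation restricts the power formula to $m-d\notin 2\mathbb{Z}_{\ge 0}$, yet you later assert that ``the exceptional logarithmic homogeneity occurs exactly at $m=d$''. That is incorrect: the meromorphic family $|\xi|^{-m}$ has poles at every $m=d+2j$, $j\ge 0$, and at those values the transform of the finite part behaves like $|z|^{2j}\bigl(c\log|z|+c'\bigr)$. For $j\ge 1$ the function $|z|^{2j}\log|z|$ lies in $C^{2j-1,\alpha}$ but not in $C^{2j}$, so a decomposition $K_f=ch(x)|x-y|^{m-d}+F_m$ with $F_m\in C^{a,\alpha}$, $a=2j$, is impossible when $h\not\equiv 0$: the term $ch(x)|x-y|^{2j}\log|x-y|$ cannot be absorbed. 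Your proof therefore does not cover these orders --- and, to be fair, neither does the paper's (its claim that $g_1$ is smooth fails precisely there), so the lemma as stated needs either to exclude $m\in d+2\mathbb{Z}_{\ge 1}$ or to insert the logarithmic factor into the leading term in those cases. Away from these exceptional orders your argument is complete, modulo the joint $(x,y)$-regularity of $F_m$, which you, like the paper, only sketch (assumption (A) bounds $c$ and $a$ but says nothing quantitative about their $x$-derivatives).
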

	\begin{proof}[Proof]
	\textcolor{black}{We first consider the case $a<m-(d-1) \le a+1$.}
		Choose a radially symmetric cut-off function $\chi(\xi) \in C^\infty_0(\mathbb{R}^d)$ such that $\chi(\xi)=1$ when $|\xi| \le 1$.   Recalling \eqref{eq2.1} , we deduce \begin{align}
			\langle K(x,x-\cdot),  \phi \rangle &= \int_{\mathbb{R}^d} \mathcal{F}^{-1}(\chi c(x, \cdot))(y) \phi(y)\,\mathrm{d}y  +\int_{\mathbb{R}^d} \mathcal{F}^{-1}((1-\chi) a(x, \cdot))(y) \phi(y)\,\mathrm{d}y \notag\\ &\quad+ \int_{\mathbb{R}^d} (1-\chi(\xi))h(x)|\xi|^{-m}  (\mathcal{F}^{-1}\phi)(\xi)\,\mathrm{d}\xi \label{eq4.14}
		\end{align} with the test function $\phi \in \mathcal{D}$. Notice that $v_1(x,y) :=\mathcal{F}^{-1}(\chi c(x,\cdot))(y) \in \mathcal{S}(\mathbb{R}^d \times \mathbb{R}^d)$ with $supp \, v \subset D \times \mathbb{R}^d$ and the bound \begin{align} \label{eq4.15}
			\|v_1(x,y)\|_{L^\infty(\mathbb{R}^d \times \mathbb{R}^d)} \lesssim M \int_{supp\, \chi} |\chi| (1+|\xi|)^{-m} \,\mathrm{d}\xi \lesssim M. 
		\end{align} Denote $v_2(x,y)=\mathcal{F}^{-1}((1-\chi)a(x,\cdot))(y)$. Obviously $v_2$ is smooth and compactly supported with respect to $x$. Recalling $1-\chi$ vanishes when $|\xi| \le 1$, then \emph{assumption (A)} implies \[
		|(1-\chi(\xi))a(x,\xi)| \lesssim  M |\xi|^{-(m+1)}.
		\] Take $p,\,q$ such that $2 \le p \le \infty$ and $\frac{1}{p}+\frac{1}{q}=1$. Applying \emph{assumption (A)} and Hausdorff-Young inequality gives the inequality \begin{align}
			\|v_2(x, \cdot)\|_{W^{s,p}(\mathbb{R}^d)} &=\|(I-\Delta)^{\frac{s}{2}}v_2(x, \cdot)\|_{L^{p}(\mathbb{R}^d)}  \lesssim  \|(1+|\cdot|^2)^{s/2}(1-\chi)a(x,\cdot)\|_{L^{q}(\mathbb{R}^d)} \notag \\ &\lesssim M \left(\int_{|\xi| \ge 1} \frac{1}{|\xi|^{(m+1-s)q}}\,\mathrm{d}\xi \right)^{\frac{1}{q}} \lesssim M. \notag
		\end{align} The above inequality holds uniformly with respect to $x$ if and only if $(m+1-s)q >n$ which can be satisfied by choosing $s>0$ and $s-{d}/{p}<m-(d-1)$. Then using Sobolev embedding theorem gives \textcolor{black}{$ W^{s,p}(\mathbb{R}^d)\subset C^{k_0, \alpha}(\mathbb{R}^d)$} with $k_0 \in \mathbb{N}, \alpha \in (0,1]$ such that $k_0 +\alpha= s-d/p$. When $a<m-(d-1) \le a+1$ with $a \in \mathbb{N}$, we can choose $p$ sufficiently large such that $s$ is close to $m-(d-1)$ which implies $k_0, \alpha$ can be chosen as $k_0=a$ and $0<\alpha<m-(d-1)-a $. Furthermore, one has $v_2 \in C^{a,\alpha}(\mathbb{R}^d \times \mathbb{R}^d)$ and \begin{equation}  \label{eq4.16}\|v_2(x,y)\|_{L^\infty(\mathbb{R}^d \times \mathbb{R}^d)} \le 
			\|v_2(x, \cdot)\|_{C^{a,\alpha}(\mathbb{R}^d)} \le  \|v_2(x, \cdot)\|_{W^{s,p}(\mathbb{R}^d)} \lesssim M.
		\end{equation} The third term in \eqref{eq4.14} can be rewritten as \begin{align*}
			h(x) \int_{\mathbb{R}^d} \mathcal{F}^{-1}((1-\chi)|\cdot|^{-m})(y) \phi(y) \,\mathrm{d}y.
		\end{align*}
		Now we claim when $m>d$, \begin{equation} \label{eq4.17}
			\mathcal{F}^{-1}((1-\chi)|\cdot|^{-m})(y)=c_1|y|^{m-d} + g_1(y),
		\end{equation} where $c_1$ is a constant only dependent on $m,d$ and $g_1(y) \in C^\infty(\mathbb{R}^d)$. In fact, direct calculation gives
		\begin{align*}
			\mathcal{F}^{-1}((1-\chi)|\cdot|^{-m})(y) &=|y|^{m-d}\mathcal{F}^{-1}((1-\chi)|\cdot|^{-m})(\hat{y}) + (2 \pi)^{-d} \int_{\mathbb{R}^d} e^{i y \cdot \xi} |\xi|^{-m}(\chi(|y|\xi)-\chi(\xi)) \,\mathrm{d}\xi. 
		\end{align*} Notice that $m>d$ and choose $\chi$ to be a radially symmetric function. We have that \[
		c_1:=\mathcal{F}^{-1}((1-\chi)|\cdot|^{-m})(\hat{y})
		\] is a finite constant. As $\chi \in C_0^\infty$, we have that \[
		g_1(y):= (2 \pi)^{-d}\int_{\mathbb{R}^d} e^{i y \cdot \xi} |\xi|^{-m}(\chi(|y|\xi)-\chi(\xi)) \,\mathrm{d}\xi
		\] is smooth and bounded with respect to $y$.
		In conclusion, when $m>d$, by combining \eqref{eq4.14}--\eqref{eq4.17} we have  \[
		K(x, x-y)= c_1 h(x) |y|^{m-d} +v_1(x,y)+v_2(x,y)+ h(x)g_1(y),
		\] which gives \[
		K_f(x, y)= c_{m,d}h(x)|x-y|^{m-d} +F_m(x,y), \quad \|F_m\|_{L^\infty(\mathbb{R}^d \times \mathbb{R}^d)} \lesssim M 
		\] with \[
		F_m(x,y):=v_1(x,x-y)+v_2(x,x-y)+ h(x)g(x-y) \in C^{a,\alpha}(\mathbb{R}^d \times \mathbb{R}^d)
		\] for $a<m-(d-1) \le a+1$.
		
		Then we consider the case $d-1<m<d$. Denote by $\Gamma$ the Gamma function  $ \Gamma(\beta)=\int^\infty_0 t^{\beta-1} e^{-t} \,\mathrm{d}t.
		$ One has the identity \begin{align} \label{eq4.18}
			|\xi|^{-m}=\frac{2^{-m/2}}{\Gamma({m}/{2})} \int_0^\infty t^{m/2-1} e^{{-t|\xi|^2}/{2}}\,\mathrm{d}t.
		\end{align} One also has  \begin{equation}\label{eq4.19}
			\mathcal{F}(e^{-t|\xi|^2/2})(y)=(2\pi)^{d/2} t^{-d/2} e^{-|y|^2/(2t)}.
		\end{equation} Applying \eqref{eq4.18}--\eqref{eq4.19} gives \begin{align}
			&\int_{\mathbb{R}^d} (1 -\chi(\xi))h(x)|\xi|^{-m}  (\mathcal{F}^{-1}\phi)(\xi)\,\mathrm{d}\xi =c_2 h(x) \int_{\mathbb{R}^d}|y|^{m-d} \mathcal{F}((1-\chi)\mathcal{F}^{-1}\phi)(y)\,\mathrm{d}y\notag\\ &=  c_2 h(x)\left(\int_{\mathbb{R}^d}|y|^{m-d} \phi(y)\,\mathrm{d}y+ \int_{\mathbb{R}^d}|y|^{m-d}\mathcal{F}(\chi\mathcal{F}^{-1}\phi)(y)\,\mathrm{d}y\right)\notag\\ & = c_2h(x)\int_{\mathbb{R}^d}[|y|^{m-d}+(|\cdot|^{m-d}\ast \mathcal{F}^{-1}\chi)(y)]\phi(y)\,\mathrm{d}y, \label{eq4.20}
		\end{align} where $c_{2}=\frac{2^{-m+d/2}}{\Gamma(m/2)}\Gamma((d-m)/2)$. Moreover, $
		g_2(y):=c_2[|\cdot|^{m-d}\ast \mathcal{F}^{-1}\chi)(y)] 
		$  is smooth. Combining \eqref{eq4.14}--\eqref{eq4.16} and \eqref{eq4.20} yields \[
		K_f(x, y)= c_{2}h(x)|x-y|^{m-d} +F(x,y), \quad \|F\|_{L^\infty(D \times D)} \lesssim M 
		\] with \[
		F(x,y):=v_1(x,x-y)+v_2(x,x-y)+ h(x)g(x-y) \in C^{0,\alpha}(\mathbb{R}^d \times \mathbb{R}^d).
		\] for $d-1<m<d$. 
		
		At last, consider the case when $d=m$. Rewrite the right-hand side of \eqref{eq4.14} as \begin{align*}
			\int_{\mathbb{R}^d} (1 &-\chi(\xi))h(x)|\xi|^{-m}  (\mathcal{F}^{-1}\phi)(\xi)\,\mathrm{d}\xi \\ &=\frac{2^{-m+d/2}}{\Gamma(m/2)} \Gamma((d-m)/2+1)h(x)\int_{\mathbb{R}^d}\frac{|y|^{m-d}-1}{(d-m)/2}\mathcal{F}((1-\chi)\mathcal{F}^{-1}\phi)(y)\,\mathrm{d}y \\ &+\frac{2^{-m+d/2}}{\Gamma(m/2)} \Gamma((d-m)/2)h(x)\int_{\mathbb{R}^d}\mathcal{F}((1-\chi)\mathcal{F}^{-1}\phi)(y)\,\mathrm{d}y.
		\end{align*}
		Letting $m \to d$ in, we obtain \begin{align} 
			&\int_{\mathbb{R}^d} (1 -\chi(\xi))h(x)|\xi|^{-n}  (\mathcal{F}^{-1}\phi)(\xi)\,\mathrm{d}\xi  \notag \\ &= -c_d h(x)\int_{\mathbb{R}^d} \log{|y|}\mathcal{F}((1-\chi)\mathcal{F}^{-1}\phi)(y)\,\mathrm{d}y +\tilde{c}_dh(x)\int_{\mathbb{R}^d}\mathcal{F}((1-\chi)\mathcal{F}^{-1}\phi)(y)\,\mathrm{d}y \label{eq4.21}
		\end{align} with constants $c_d, \tilde{c}_d>0$. Notice that \begin{align}
			g_3(y)=-c_d[\log{|\cdot|} \ast \mathcal{F}^{-1}\chi](y)+\tilde{c}_d\left\{1-\int_{\mathbb{R}^d}\mathcal{F}^{-1}\chi(y)\,\mathrm{d}y\right\}  \label{eq4.22}
		\end{align} is smooth. 
		Combining \eqref{eq4.14}--\eqref{eq4.16} and \eqref{eq4.21}--\eqref{eq4.22} yields \[
		K_f(x,y)=c_d h(x)\log|x-y|+F_m(x,y),\quad\|F\|_{L^\infty(D \times D)} \lesssim M 
		\] with \[
		F_m(x,y):=v_1(x,x-y)+v_2(x,x-y)+ h(x)g_3(x-y) \in C^{0,\alpha}(\mathbb{R}^d \times \mathbb{R}^d)
		\] for $m=d$.
	\end{proof}
	
	\textcolor{black}{In what follows, we show that $I_1$ is bounded with respect to $k\in\mathcal{R}$ and similar arguments apply to $I_2, I_3, I_4$.}
	 We consider the following two situations.
	
	\emph{Case 1.} Let $m>d-1$ and $m \neq d$. We have from Lemma \ref{le4.1} that  \[
		K_f(x, y)= ch(x)|x-y|^{m-d} +F_m(x,y),
		\] which yields that the kernel is continuous or weakly singular. Therefore, \eqref{eq4.7} holds for $k \in \mathcal{R}$ and thus \textcolor{black}{$I_1$} is analytic. Moreover, when $k=k_1+ik_2 \in \mathcal{R}$, we further obtain the following bound  
\begin{align}
		|I_1| 
		&\lesssim e^{2k_1}\int_{\partial B_R}\int_{\partial B_R}\int_{D}\int_{D}|\partial_{\nu(x)}\Phi(x,\tau)\partial_{\nu(y)}\Phi(y,t)| ({h(\tau)} |\tau-t|^{m-d}+M)\,\mathrm{d}\tau\mathrm{d}t\mathrm{d}s(x) \mathrm{d}s(y) \notag\\ &\lesssim e^{2k_1} M \int_{\partial B_R}\int_{\partial B_R}\int_{D}\int_{D}|\partial_{\nu(x)}\Phi(x,\tau)\partial_{\nu(y)}\Phi(y,t)|  (|\tau-t|^{m-d}+1)\,\mathrm{d}\tau\mathrm{d}t\mathrm{d}s(x) \mathrm{d}s(y). \label{eq4.9}
	\end{align} 

\textcolor{black}{To proceed, noting that the Green function $\Phi$ takes different forms for $d=2$ and $d=3$, we discuss the following two cases.}

If $d=3$, a direct calculation gives \begin{equation} \label{eq4.10}
		|\partial_{\nu(y)}\Phi(x,y)| \le \frac{|k_1|e^{k_1|x-y|}}{4\pi|x-y|}+\frac{e^{k_1|x-y|}}{4\pi|x-y|^2}.
	\end{equation} We show that
\begin{align}\label{eqq4.1}
|I_1| \lesssim e^{2(2R+1)|k_1|} M R^4 (1+|k_1|)^2.
\end{align}
In fact, we only need to estimate the term with highest singularity. To this end, with the help of Lemma \ref{le3.1} we have the following estimate \begin{align}
&\int_{\partial B_R}\int_{\partial B_R}\int_{D}\int_{D}\frac{1+|t-\tau|^{m-d}}{|x-\tau|^2 |y-t|^2}\,\mathrm{d}\tau\mathrm{d}t\mathrm{d}s(x) \mathrm{d}s(y) \lesssim \int_{\partial B_R}\int_{\partial B_R}\int_{D}\frac{1}{|x-\tau|^2|y-\tau|^2}\,\mathrm{d}\tau \mathrm{d}s(x)\mathrm{d}s(y) \notag\\  &\lesssim  \int_{\partial B_R}\int_{\partial B_R}\frac{1}{|x-y|}\mathrm{d}s(x)\mathrm{d}s(y) \lesssim R^4.\label{new1}
\end{align} For other parts in right-hand side of \eqref{eq4.9}, we have simialr estimates. Combining \eqref{eq4.9} and \eqref{new1} yields \eqref{eqq4.1}.
 
 When $d=2$, by the following integral form of the Hankel function \cite{finco2006p}
	\begin{equation} \label{eq4.12}
		H_0^{(1)}(z) = C e^{iz} \int_{0}^\infty e^{-s}s^{-1/2}(s/2-iz)^{-1/2}\,\mathrm{d}s,
	\end{equation} we obtain \begin{align*}
		  |\partial_{\nu(y)}\Phi(x,y)| \lesssim \frac{|k_1|e^{k_1|x-y|}}{|k_1(x-y)|^{\frac{1}{2}}} + \frac{e^{k_1|x-y|}}{|k_1|^\frac{1}{2}|x-y|^{\frac{3}{2}}}, \quad k \in \mathcal{R},\, x \in D, \, y \in \partial B_R.
	\end{align*}  
Then in a similar way as the derivation of \eqref{eqq4.1}, we can obtain \begin{align}\label{eqq4.2}|I_1| \lesssim |k_1|^{-1} e^{2(2R+1)|k_1|} M R^2 (1+|k_1|)^2. 
\end{align} Combining \eqref{eqq4.1} and \eqref{eqq4.2} gives the estimate 
\begin{equation} \label{eqq4.3}
|I_1| \lesssim e^{2(2R+1)|k_1|} M R^{2d-2} |k_1|^{d-3} (1+|k_1|)^2.
\end{equation}

	\textcolor{black}{\emph{Case 2.} When $m=d$ , Lemma \ref{le4.1} shows $K_{f}(\tau,t)=c h(\tau)  \log{|\tau-t|}+F_m(\tau,t)$ which is still weakly singular. Therefore, we can apply the second inequality of Lemma \ref{le3.1} to verify $I_1$ satisfies the inequalities \eqref{eqq4.3}}.
	
	\textcolor{black}{Combining the above arguments, we arrive at the following lemma which provides the analyticity and boundedness of the data with respect to $k\in\mathcal{R}$.}
	\begin{Le} \label{le4.2}
		Suppose that the random function $f$ satisfies assumption (A). We have that $I_j(k,\theta_1,\theta_2), j= 1, ... , 4,$ is analytic with respective to $k \in \mathcal{R}$. Furthermore, the following estimates \textcolor{black}{\begin{align*}
			|I_1(k,\theta_1,\theta_2)| &\lesssim e^{2(2R+1)|k_1|} M R^{2d-2} |k_1|^{d-3} (1+|k_1|)^2,\\
			|I_2(k,\theta_1,\theta_2)| &\lesssim e^{2(2R+1)|k_1|} M R^{2d-2} |k_1|^{d-2} (1+|k_1|),\\
			|I_3(k,\theta_1,\theta_2)| &\lesssim e^{2(2R+1)|k_1|} M R^{2d-2} |k_1|^{d-2} (1+|k_1|),\\
			|I_4(k,\theta_1,\theta_2)| &\lesssim e^{2(2R+1)|k_1|} M R^{2d-2} |k_1|^{d-1} ,
		\end{align*} hold for $\textcolor{black}{k=k_1+ik_2 \in \mathcal{R}}$.}
	\end{Le}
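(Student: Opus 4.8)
The plan is to recognize that Lemma~\ref{le4.2} merely records and packages the analysis already carried out for $I_1$ in the discussion preceding its statement, so the proof has two parts: extracting the conclusions for $I_1$, and then running the identical argument for $I_2,I_3,I_4$ while tracking the extra powers of $k$ that distinguish them.

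First I would record the $I_1$ argument. Using the pointwise representation \eqref{eq3.2.1} for $u$ and $\partial_\nu u$, each $I_j$ is an integral over $\partial B_R\times\partial B_R\times D\times D$ of $K_f(\tau,t)$ against an explicit kernel $J_j(x,y,\tau,t;k)$; for $I_1$ this kernel is $J=\partial_{\nu(x)}\Phi(x,\tau)\,\partial_{\nu(y)}\Phi(y,t)\,e^{-ik(\theta_1\cdot x+\theta_2\cdot y)}$. Since $x,y\in\partial B_R$ while $\tau,t\in D\subset\subset B_R$, the Green-function factors are bounded and smooth in their spatial arguments and holomorphic in $k\in\mathcal{R}$, so the only genuine singularity of the integrand is that of $K_f(\tau,t)$. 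By Lemma~\ref{le4.1} this singularity is at most weakly singular (a power $|\tau-t|^{m-d}$ with $m-d>-1$, or a logarithm when $m=d$) and hence lies in $L^1(D\times D)$; this yields the uniform bounds \eqref{eq4.7}--\eqref{eq4.8}, which let us differentiate under the integral sign and conclude that $I_1$ is analytic on $\mathcal{R}$. The quantitative bound \eqref{eqq4.3} then follows from the pointwise estimate \eqref{eq4.10} on $\partial_\nu\Phi$ together with Lemma~\ref{le3.1}.

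For $I_2,I_3,I_4$ the plan is to repeat this word for word with $J$ replaced by the appropriate kernel: $I_2,I_3$ each carry one factor $\partial_\nu\Phi$, one factor $\Phi$, and a prefactor $-ik\,\theta\cdot\hat x$ (or $\hat y$), while $I_4$ carries two factors $\Phi$ and a prefactor $k^2(\theta_1\cdot\hat x)(\theta_2\cdot\hat y)$. None of these modifications affects the holomorphy in $k$ or the $L^1$ control of $K_f$, so the analyticity argument is unchanged. The bounds differ only in the power of $|k_1|$, which I would obtain by a power count: for $k=k_1+ik_2\in\mathcal{R}$ one has $k_1>0$, $|k|\le\sqrt2\,|k_1|$ and $e^{-k_2|x-y|}\le e^{|k_1||x-y|}$, each Green-function factor contributes a base power $|k_1|^{(d-3)/2}$ (no factor when $d=3$, an extra $|k_1|^{-1/2}$ when $d=2$, read off from the integral representation \eqref{eq4.12}), and replacing $\Phi$ by $\partial_\nu\Phi$ may raise this by one unit in $|k_1|$, cf.\ \eqref{eq4.10}. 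Two Green-function factors thus give a base $|k_1|^{d-3}$, and the explicit prefactors contribute $|k_1|^0,|k_1|^1,|k_1|^2$ for $I_1$, for $I_2,I_3$, and for $I_4$ respectively; together with the derivative gains this produces exactly the stated prefactors $|k_1|^{d-3}(1+|k_1|)^2$, $|k_1|^{d-2}(1+|k_1|)$ and $|k_1|^{d-1}$, all of total degree $d-1$ in $|k_1|$. The spatial factor $R^{2d-2}$ comes from the two surface integrations over $\partial B_R$ once Lemma~\ref{le3.1} has absorbed the integrals over $D$, and the exponential $e^{2(2R+1)|k_1|}$ from bounding $|x-\tau|,|y-t|\le 2R+1$ in the two exponentials.

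The main obstacle is the two-dimensional case. There the Green function is a Hankel function that is logarithmic as its argument tends to $0$ but decays like $|k|^{-1/2}$ for large argument, so the uniform $|k_1|^{-1/2}$ per factor entering the power count above is not immediate and must be extracted from the integral representation \eqref{eq4.12}; the same representation is needed to check that $\partial_k J_j$ stays integrable uniformly over the whole sector $\mathcal{R}$, which is what legitimizes differentiation under the integral there. The remaining work is bookkeeping: verifying that the exponential and $R$-powers are uniform in $x,y,\tau,t$ and in $\theta_1,\theta_2\in\mathbb{S}^{d-1}$, which is routine once the $d=2$ estimates are secured.
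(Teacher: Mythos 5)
Your proposal is correct and follows essentially the same route as the paper: writing each $I_j$ as an integral of $K_f$ against an explicit holomorphic kernel, invoking Lemma~\ref{le4.1} to reduce the singularity to a weakly singular (hence $L^1$) one so that \eqref{eq4.7}--\eqref{eq4.8} justify differentiation under the integral, and then obtaining the quantitative bounds from \eqref{eq4.10} (resp.\ the Hankel representation \eqref{eq4.12} when $d=2$) combined with Lemma~\ref{le3.1}. Your explicit power count in $|k_1|$ for $I_2,I_3,I_4$ simply fills in the ``similar arguments'' the paper leaves to the reader, and it reproduces the stated exponents.
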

	\begin{Rem}\textcolor{black}{
		(i) Conclusions like Lemma \ref{le4.2} also hold for $I_j(-k, \theta_1, \theta_2)$, $j=1,2,3,4$ after analogous discussions.}
		
		(ii) \textcolor{black}{We have to use Lemma \ref{le3.1} to derive inequality \eqref{new1}.
		Indeed, for $x,y \in \partial B_R$ and $t,\tau \in D$, by a direction calculation one has \[\int_{\partial B_R}\int_{\partial B_R}\int_{D}\int_{D}\frac{1+|t-\tau|^{m-d}}{|x-\tau|^2 |y-t|^2}\,\mathrm{d}\tau\mathrm{d}t\mathrm{d}s(x) \mathrm{d}s(y) \lesssim R^4/\text{dist}(\partial B_R, D)^4.
\] However, when $\text{dist}(\partial B_R, D)\to 0$, the right-hand side of the above inequality tends to infinity. Based on this reason, we use Lemma \ref{le3.1} to estimate this integral.}
	\end{Rem}
	
	The following unique continuation argument \cite{2016increasing} is useful in the subsequent analysis.
	\begin{Le}\label{le4.3}
		Let $U(z)$ be an analytic function in $\mathcal{R}$ and continuous in $\bar{\mathcal{R}}$. If  \begin{align*}
			\left\{\begin{array}{ccc}
				|U(z)| \le \varepsilon, &   z \in (0,L],\\
				|U(z)| \le V, &   z \in \mathcal{R}, \\
				|U(z)|=0, & z=0,
			\end{array}\right.
		\end{align*} with constants $\varepsilon,L,V>0$, then there exists a function $\mu(z)$ satisfying 
		\begin{align*}
			\left\{\begin{array}{cc}
				\mu(z) \ge 1/2, & z \in (L, 2^{1/4} L), \\ 
				\mu(z) \ge \pi^{-1}((z/L)^4-1)^{-1/2},  &  z \in (2^{1/4}L, \infty)
			\end{array}\right.
		\end{align*} such that \[
		|U(z)| \le V \varepsilon^{\mu(z)},\quad \forall z \in (L, \infty).
		\] 
	\end{Le}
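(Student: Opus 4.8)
The plan is to unfold the sector $\mathcal R$ onto a half-plane, reduce the estimate to a two-constants (harmonic-measure) inequality, and then compute the relevant harmonic measure in closed form.

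First I would apply the conformal map $w=z^{2}$, which sends $\mathcal R=\{|\arg z|<\pi/4\}$ bijectively onto the right half-plane $H=\{w:\operatorname{Re}w>0\}$, carrying the boundary rays $\arg z=\pm\pi/4$ onto the imaginary axis and the segment $(0,L]$ onto $(0,L^{2}]\subset H$. Setting $g(w)=U(\sqrt w)$ (principal branch), $g$ is analytic on $H$, continuous up to $\partial H$, bounded there by $V$, with $|g|\le\varepsilon$ on $(0,L^{2}]$ and $g(0)=0$. On the slit domain $\Omega=H\setminus(0,L^{2}]$ the subharmonic function $\log|g|$ is at most $\log V$ on the imaginary axis and at most $\log\varepsilon$ on the two sides of the slit. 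Comparing it with the harmonic majorant $\log V+(\log\varepsilon-\log V)\,\omega(w)$, where $\omega(w)$ denotes the harmonic measure of the slit $(0,L^{2}]$ relative to $\Omega$, the maximum principle gives $|g(w)|\le V^{1-\omega(w)}\varepsilon^{\omega(w)}$. Since we may assume $\varepsilon\le V$ and (after harmlessly enlarging the bound) $V\ge1$, this is at most $V\varepsilon^{\omega(w)}$. It therefore suffices to set $\mu(z)=\omega(z^{2})$ and to bound $\omega$ from below along the positive real axis.

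Second, I would evaluate $\omega$ at a real point by a chain of explicit conformal maps, each preserving harmonic measure. Normalising $L=1$ by the scaling $w\mapsto w/L^{2}$ and using reflection symmetry across the real axis, the computation reduces to the mixed Dirichlet--Neumann problem on the first quadrant with $\omega=1$ on $(0,1)$, $\omega=0$ on the positive imaginary axis, and $\partial_{n}\omega=0$ on $(1,\infty)$, to be evaluated at a real point $s=(z/L)^{2}>1$. The map $p\mapsto p^{2}$ carries this to the upper half-plane (Neumann on $(1,\infty)$, $\omega=1$ on $(0,1)$, $\omega=0$ on $(-\infty,0)$), and then $\eta=\sqrt{p-1}$ carries it to a quadrant in which the Neumann part is the positive real axis and the Dirichlet data lie on the imaginary axis. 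An even reflection across the Neumann ray produces the right half-plane with boundary data $\omega=1$ for $|\operatorname{Im}\eta|<1$ and $\omega=0$ for $|\operatorname{Im}\eta|>1$, the evaluation point becoming $\eta=\sqrt{s^{2}-1}$ on the real axis. The half-plane Poisson formula then yields the closed form
\[
\mu(z)=\omega=\frac{2}{\pi}\arctan\frac{1}{\sqrt{(z/L)^{4}-1}}.
\]

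Finally, the two stated lower bounds follow from elementary properties of $\arctan$. For $z\in(L,2^{1/4}L)$ one has $(z/L)^{4}\in(1,2)$, hence $((z/L)^{4}-1)^{-1/2}>1$ and $\arctan(\cdot)>\pi/4$, giving $\mu(z)>1/2$. For $z>2^{1/4}L$ one has $x:=((z/L)^{4}-1)^{-1/2}\in(0,1)$, and concavity of $\arctan$ on $[0,1]$ gives $\arctan x\ge(\pi/4)x\ge x/2$, whence $\mu(z)=\tfrac{2}{\pi}\arctan x\ge\pi^{-1}((z/L)^{4}-1)^{-1/2}$. I expect the main obstacle to be the harmonic-measure computation itself: correctly treating the interior slit together with the mixed Dirichlet--Neumann condition through the conformal-map chain, so as to arrive at the explicit $\arctan$ formula. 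By contrast, the unfolding of the sector and the final monotonicity estimates are routine.
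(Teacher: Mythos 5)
Your proposal is correct, but there is nothing in the paper to compare it against: Lemma 4.3 is quoted without proof from \cite{2016increasing}, so the relevant benchmark is the standard argument in that literature, which is precisely the one you reconstruct — the two-constants theorem on the slit sector plus an explicit computation of the harmonic measure of $(0,L]$. Your conformal chain does land on the correct closed form $\mu(z)=\tfrac{2}{\pi}\arctan\bigl(((z/L)^4-1)^{-1/2}\bigr)$ (equivalently $\tfrac{2}{\pi}\arcsin((L/z)^2)$), and the endgame is right: at $z=2^{1/4}L$ the formula equals exactly $1/2$, and $\arctan x\ge\tfrac{\pi}{4}x$ on $[0,1]$ gives the tail bound. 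One simplification worth knowing: the mixed Dirichlet--Neumann bookkeeping and the two reflections are unnecessary. The map $w=(z/L)^4$ is injective on $\mathcal{R}$ (opening $\pi/2$) and sends $\mathcal{R}\setminus(0,L]$ directly onto $\mathbb{C}\setminus(-\infty,1]$, with the sector's boundary rays unfolding onto the two sides of $(-\infty,0]$ (where $\omega=0$) and the two sides of the slit onto $(0,1]$ (where $\omega=1$); then $\zeta=\sqrt{w-1}$ maps this onto the right half-plane, and the angle form of the Poisson kernel gives the same $\arctan$ formula in two steps. Your route via $z\mapsto z^2$ twice with reflections computes the same harmonic measure, just more laboriously.

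Two caveats, neither fatal but worth stating honestly. First, ``harmlessly enlarging the bound'' to $V\ge 1$ is not literally harmless, because $V$ appears in the conclusion: the two-constants theorem yields $|U(z)|\le V^{1-\mu(z)}\varepsilon^{\mu(z)}$, which implies $|U(z)|\le V\varepsilon^{\mu(z)}$ only when $V\ge 1$. This is a defect of the lemma as stated rather than of your proof — for $V^2<\varepsilon<V<1$ the two-constants bound is essentially sharp and the stated conclusion can fail — and in the paper's application $V$ is of size $M^2R^{4d-4}$ up to constants, so only the regime $V\ge1$ is ever used; still, you should flag that you are proving the lemma under that (implicit) normalization. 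Second, the maximum-principle comparison of $\log|g|$ with the harmonic majorant takes place on an unbounded domain, so it requires the Phragm\'en--Lindel\"of (extended) maximum principle: this is legitimate here because $\log|g|$ minus the majorant is bounded above (as $|g|\le V$ and the majorant is bounded below by $\log\varepsilon$) and the only boundary points where the comparison is not controlled, $0$ and $\infty$, form a set of zero capacity. You gloss over this; it is standard, but it is the one analytic point in the argument that genuinely needs the global boundedness hypothesis $|U|\le V$.
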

	Combining Lemma \ref{le4.2}--\ref{le4.3} yields the following conclusion. \begin{Le}\label{le4.4}
		Let $f$ satisfy assumption (A). Then we have the estimte \[
		|\epsilon^2(k,\theta_1,\theta_2)| \lesssim K^{-1}M^2 R^{4d-4}\epsilon^{2\mu(k)} e^{(8R+5)k},\quad k \in (K,\infty)
		\] with the function $\mu(k)$ satisfying
		\begin{align}\label{eq4.23}
			\left\{\begin{array}{cc}
				\mu(k) \ge 1/2, & k \in (K, 2^{1/4} K), \\ 
				\mu(k) \ge \pi^{-1}((k/K)^4-1)^{-1/2},  &  k \in (2^{1/4}K, \infty).
			\end{array}\right.
		\end{align} .
	\end{Le}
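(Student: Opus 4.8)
The plan is to apply the analytic continuation principle of Lemma~\ref{le4.3} to a suitably renormalized version of the correlation functional, with $\theta_1,\theta_2\in\mathbb{S}^{d-1}$ held fixed throughout. Concretely, I would set
\[
U(z):=z\,e^{-(8R+5)z}\,\epsilon^2(z,\theta_1,\theta_2),\qquad z\in\mathcal{R}.
\]
The exponential prefactor $e^{-(8R+5)z}$ is chosen to absorb the growth $e^{(8R+4)|k_1|}$ that arises when the bounds of Lemma~\ref{le4.2} are multiplied, leaving a residual decaying factor $e^{-z_1}$ which then dominates the remaining polynomial factors; the linear factor $z$ simultaneously enforces the vanishing condition at the origin demanded by Lemma~\ref{le4.3} and, via the supremum of $ze^{-(8R+5)z}$ on the positive real axis, is what produces the $K^{-1}$ in the final bound.

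First I would check the three hypotheses of Lemma~\ref{le4.3} with $L=K$. Analyticity of $U$ in $\mathcal{R}$ and continuity up to $\bar{\mathcal{R}}$ follow from Lemma~\ref{le4.2} and the remark following it: each $I_j(z)$ and each $I_j(-z)$ is analytic in $\mathcal{R}$, products and sums preserve analyticity, and $z\mapsto ze^{-(8R+5)z}$ is entire, while continuity on the rays $\arg z=\pm\pi/4$ comes from the integral representations. For the bound on the segment $(0,K]$ I would use $|ze^{-(8R+5)z}|\le \frac{1}{(8R+5)e}$ for $z>0$ together with $|\epsilon^2(z,\theta_1,\theta_2)|\le\epsilon^2$ for $z\in[0,K]$ (the definition of the multi-frequency data), giving $|U(z)|\le C\epsilon^2$ there; this plays the role of the small quantity in Lemma~\ref{le4.3}. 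The vanishing $U(0)=0$ is then immediate from the continuity of $\epsilon^2(\cdot,\theta_1,\theta_2)$ at the origin and the factor $z$.

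The central step, and what I expect to be the main obstacle, is the uniform bound $|U(z)|\le V$ on all of $\mathcal{R}$ with $V\simeq M^2R^{4d-4}$. I would multiply the four estimates of Lemma~\ref{le4.2} for $I_j(z)$ by their counterparts for $I_j(-z)$; since $\mathrm{Re}(-z)$ has the same modulus as $z_1$, the dominant exponential of each product is $e^{4(2R+1)|z_1|}=e^{(8R+4)z_1}$, which is killed by $e^{-(8R+5)z_1}$ down to $e^{-z_1}$. As $z\in\mathcal{R}$ gives $z_1=\mathrm{Re}\,z\ge|z|/\sqrt2>0$, each surviving term $|z|\,z_1^{\,p}e^{-z_1}$ is bounded for every fixed power $p$, so $|U(z)|\lesssim M^2R^{4d-4}$. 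The delicate point is $d=2$, where the Lemma~\ref{le4.2} bound for $I_1$ carries the negative power $|z_1|^{d-3}=|z_1|^{-1}$ and hence the crude product bound degenerates as $z\to0$. I would resolve this by supplementing the large-$|z|$ estimate with a direct bound on $\epsilon^2(z,\theta_1,\theta_2)$ for bounded $z$, using the kernel splitting of Lemma~\ref{le4.1} and the boundedness of $\Phi$ and its normal derivatives off the diagonal exactly as in the derivation of \eqref{new1}, so that $|U(z)|\lesssim M^2R^{4d-4}$ persists up to the origin; the prefactor $z$ keeps $U$ bounded there.

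Finally I would invoke Lemma~\ref{le4.3} to obtain $|U(z)|\le V(C\epsilon^2)^{\mu(z)}$ for $z\in(K,\infty)$ with $\mu$ as in \eqref{eq4.23}, and then undo the normalization at real $k>K$:
\[
|\epsilon^2(k,\theta_1,\theta_2)|=k^{-1}e^{(8R+5)k}|U(k)|\le C\,M^2R^{4d-4}\,k^{-1}(C\epsilon^2)^{\mu(k)}e^{(8R+5)k}.
\]
Using $k>K\Rightarrow k^{-1}\le K^{-1}$ and $C<1\Rightarrow C^{\mu(k)}\le1$, this collapses to the asserted estimate $|\epsilon^2(k,\theta_1,\theta_2)|\lesssim K^{-1}M^2R^{4d-4}\epsilon^{2\mu(k)}e^{(8R+5)k}$, uniformly in $\theta_1,\theta_2\in\mathbb{S}^{d-1}$.
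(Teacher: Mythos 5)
Your proposal is essentially the paper's own proof: the paper sets $U(k)=k\,\epsilon^2(k,\theta_1,\theta_2)$ and applies Lemma \ref{le4.3} with $L=K$ to $e^{-(8R+5)k}U(k)$ --- exactly your normalized function --- obtaining the uniform bound $\lesssim M^2R^{4d-4}$ on $\mathcal{R}$ by multiplying the Lemma \ref{le4.2} estimates for $|I_j(k)|$ and $|I_j(-k)|$, checking smallness on $(0,K]$ and vanishing at $0$, and finally dividing by $k>K$ to produce the factor $K^{-1}$. The only difference is that you explicitly flag and patch the $d=2$ degeneracy coming from the $|k_1|^{d-3}$ factor near the origin (where the crude product bound blows up), a point the paper's proof silently glosses over.
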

	\textcolor{black}{\begin{proof}[Proof] 
Denote $U(k)=k \epsilon^2(k,\theta_1,\theta_2)$. We have that $U(0)=0$ and $U(k)$ is analytic and continuous for $k \in \bar{\mathcal{R}}$.
		It follows from Lemma \ref{le4.2} when $k \in\mathcal{R}$, \begin{align*}
			|U(k)| \lesssim |k| \sum_{j=1}^4|I_j(k)|\sum_{l=1}^4|I_l(-k)| \lesssim e^{8R+5|k_1|} M^2 R^{4d-4},
		\end{align*} which yields \begin{align*}
			|e^{-(8R+5)k} U(k)| \lesssim M^2 R^{4d-4}.
		\end{align*} Obviously, for $k \in (0,K]$, one has $|e^{-(8R+5)k} U(k)| \le  \epsilon^2$. Applying Lemma 4.4 to $e^{-(8R+5)k} U(k)$ gives \[
		|e^{-(8R+5)k} U(k)| \lesssim  M^2 R^{4d-4}\epsilon^{2\mu(k)},\quad k \in (K,\infty)
		\] where $\mu(k)$ satisfies \eqref{eq4.23}. Thus, we have \[
		|\epsilon^2(k,\theta_1,\theta_2)| \lesssim K^{-1} M^2 R^{4d-4} \epsilon^{2\mu(k)} e^{(8R+5)k},\quad k \in (K,\infty),
		\]
		which completes the proof.
	\end{proof}} 
	Denote $\mathcal{C}_Q=\{h \in C_0^\infty(D): \|h\|_{H^s(\mathbb{R}^d)} \le Q \}$ with $s>0$. We are now in a position to state the increasing stability result of inverse random source problem. The proof adopts the argument of analytic continuation developed in \cite{zhai2023increasing}.
	\begin{Thm}\label{thm4.1}
		Let the random function $f$ satisfy  assumption (A) and assume $h \in \mathcal{C}_Q$. We have the following stability estimate 
		\begin{align}\label{stability}
			\|h\|^2_{L^2(D)} \lesssim K^{2m+\frac{2d}{d+2s}}\epsilon^2 + (R+1)^{2d+\frac{2}{3}}\frac{Q^2+M^2}{K^{\frac{8s}{3(2s+d)}}E^\frac{s}{2s+d}},
		\end{align} with $E=|\log{\epsilon}|$.
	\end{Thm}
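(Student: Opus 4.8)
The plan is to pass to the Fourier side and combine the three tools assembled above: the data bound \eqref{eq4.6}, which controls $|\hat h(\xi)|$ by the correlation data whenever $|\xi|\le 2k$; the analytic continuation of Lemma \ref{le4.4}, which extends control of $\epsilon^2(k,\theta_1,\theta_2)$ to $k>K$; and the a priori class $\mathcal{C}_Q$. By Plancherel's identity $\|h\|_{L^2(D)}^2=\|h\|_{L^2(\mathbb{R}^d)}^2\simeq\int_{\mathbb{R}^d}|\hat h(\xi)|^2\,\mathrm{d}\xi$, and I would split this integral into a low-frequency ball $\{|\xi|\le 2K\}$, an intermediate shell $\{2K<|\xi|\le 2b\}$, and a high-frequency tail $\{|\xi|>2b\}$, where $b\ge K$ is a continuation horizon to be fixed at the very end by optimization.

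On the low-frequency ball I would apply \eqref{eq4.6} (bounding $\sup_\theta|\sum_j I_j|^2$ by the measured quantity $\epsilon^2$ after a Cauchy–Schwarz step with $k=|\xi|/2\le K$), which, after integration in $\xi$, yields a term linear in $\epsilon^2$ with a polynomial weight in $K$, together with a contribution from the $O(1/k)$ remainder of \eqref{eq4.3} carrying the factor $M^2R^{2d}$. On the tail I would use only $h\in\mathcal{C}_Q$: since $\|h\|_{H^s}\le Q$, one has $\int_{|\xi|>2b}|\hat h|^2\,\mathrm{d}\xi\le (2b)^{-2s}Q^2$, so enlarging the effective horizon $b$ directly sharpens this term. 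This monotone dependence on $b$ is precisely the mechanism behind increasing stability.

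The crux is the intermediate shell. There I would again use \eqref{eq4.6} with $k=|\xi|/2\in(K,b]$, but now $\epsilon^2(k,\theta_1,\theta_2)$ is no longer measured and must be replaced by the continuation bound of Lemma \ref{le4.4}, namely $\epsilon^2(k,\cdot)\lesssim K^{-1}M^2R^{4d-4}\epsilon^{2\mu(k)}e^{(8R+5)k}$, where $\mu(k)\ge\pi^{-1}((k/K)^4-1)^{-1/2}\sim(K/k)^2$ for $k\gg K$. Writing $\epsilon^{2\mu(k)}=e^{-2\mu(k)E}$ with $E=|\log\epsilon|$, the integrand carries the competing exponential factor $\exp\big((8R+5)k-2\mu(k)E\big)$; since this exponent is increasing in $k$, the shell integral is governed by the endpoint $k=b$, and the estimate only pays off if we keep $(8R+5)b\lesssim\mu(b)E$. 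This constraint forces the scaling $b\sim\big(K^2E/(R+1)\big)^{1/3}$, which is the origin of the cube root (the $\tfrac13$ appearing throughout the exponents) and of the powers of $R+1$.

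Inserting this horizon into the tail estimate $b^{-2s}Q^2$ and then optimizing the remaining free cutoff so as to balance the genuinely growing contributions (the remainder term in $M^2R^{2d}$ and the a priori term in $Q^2$) against one another is what produces the denominators $2s+d$ and the final exponents $\tfrac{2d}{d+2s}$, $\tfrac{8s}{3(2s+d)}$ and $\tfrac{s}{2s+d}$, along with the grouping $Q^2+M^2$. I expect two steps to be delicate: first, verifying that the exponent $(8R+5)k-2\mu(k)E$ is controlled uniformly on the whole shell so that the continuation contribution is absorbed into the data term rather than dominating it; and second, faithfully tracking the polynomial prefactors in $R$, $M$ and $k$ through both balances, since it is exactly this bookkeeping that fixes the stated powers of $K$, $E$ and $R+1$. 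The analytic continuation principle underpinning the intermediate estimate is furnished by Lemma \ref{le4.4}, following \cite{zhai2023increasing}.
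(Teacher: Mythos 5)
Your high-level skeleton is genuinely the same as the paper's: Plancherel splitting of $\|h\|_{L^2}^2$, the bound \eqref{eq4.6} on low/intermediate frequencies, the analytic continuation of Lemma \ref{le4.4} beyond $K$, the $H^s$ bound for the tail, and the observation that balancing $e^{(8R+5)k}$ against $\epsilon^{2\mu(k)}=e^{-2\mu(k)E}$ with $\mu(k)\sim (K/k)^2$ produces a horizon of the form $K^{2/3}E^{\beta}(R+1)^{-1/3}$; that is indeed where the cube roots come from (the paper takes $A=((8R+5)\pi)^{-1/3}K^{2/3}E^{1/4}$ rather than your $E^{1/3}$ scaling, a harmless discrepancy since the continuation term ends up exponentially small and is absorbed via $e^{-x}\le n!/x^n$).

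The genuine gap is your pointwise coupling $k=|\xi|/2$ when invoking \eqref{eq4.6}. That estimate carries the remainder $M^2R^{2d}/k^2$ inherited from the $O(MR^d/|k|)$ term in \eqref{eq4.3}, so it is accurate only when $k$ is \emph{large}; taking the smallest admissible $k=|\xi|/2$ maximizes the error. Integrating your bound over the shell $2K<|\xi|\le 2b$ gives
\begin{equation*}
\int_{2K<|\xi|\le 2b}\frac{M^2R^{2d}}{(|\xi|/2)^{2}}\,\mathrm{d}\xi \;\simeq\; M^2R^{2d}\,b \quad (d=3), \qquad \simeq\; M^2R^{2d}\log(b/K) \quad (d=2),
\end{equation*}
which grows with $b\sim (K^2E/(R+1))^{1/3}$ and therefore blows up as $\epsilon\to 0$, so your final bound cannot reduce to \eqref{stability}; on the low ball the same choice gives $M^2R^{2d}K^{d-2}$ for $d=3$ (growing in $K$, the opposite of increasing stability) and a divergent integral at $\xi=0$ for $d=2$, and even the data term comes out as $K^{2m+d}\epsilon^2$ rather than $K^{2m+2d/(d+2s)}\epsilon^2$. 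No tuning of your ``remaining free cutoff'' repairs this: as long as $k$ is tied to $|\xi|$, balancing the growing remainder against the tail $Q^2\rho^{-2s}$ fixes a cutoff independent of $K$ and $E$, hence no increasing stability. The missing idea --- the paper's key maneuver --- is to decouple the wavenumber from the Fourier variable: since \eqref{eq4.6} holds for \emph{every} $|\xi|\le 2k$, one uses the single largest available wavenumber ($k=A$ in the continuation regime, $k=K$ otherwise) for all $\xi$ in a ball whose radius $2A^\gamma$ is an independent parameter. Then the remainder contributes $M^2R^{2d}A^{d\gamma-2}$ and the tail $Q^2A^{-2\gamma s}$, and the choice $\gamma=2/(2s+d)$ equalizes the exponents, $d\gamma-2=-2\gamma s=-4s/(2s+d)$, both now decaying in $A$. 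Together with the case distinction according to whether $2^{1/4}((8R+5)\pi)^{1/3}K^{1/3}<E^{1/4}$ or not (which you also omit, and which is what produces the $K^{2m+2d/(d+2s)}\epsilon^2$ term when continuation is unavailable), this yields \eqref{stability}.
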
 \begin{proof}[Proof]
		Without losing generality, we assume $\epsilon < e^{-1}$. Then take \begin{align*}
			A= \left\{\begin{array}{cc}
				\frac{1}{((8R+5)\pi)^{\frac{1}{3}}} K^{\frac{2}{3}}E^{\frac{1}{4}},  & 2^{\frac{1}{4}}((8R+5)\pi)^{\frac{1}{3}}K^{\frac{1}{3}}< E^{\frac{1}{4}}, \\
				K,  & E^{\frac{1}{4}} \le 2^{\frac{1}{4}}((8R+5)\pi)^{\frac{1}{3}}K^{\frac{1}{3}}.
			\end{array}\right.
		\end{align*}
		
		If $2^{\frac{1}{4}}((8R+5)\pi)^{\frac{1}{3}}K^{\frac{1}{3}}< E^{\frac{1}{4}}$, we have $A>2^{\frac{1}{4}}K>K$. Then for $k \in (K,A]$, applying Lemma \ref{le4.4} yields \begin{align*}
			\epsilon^2(k,\theta_1,\theta_2) &\lesssim K^{-1} \textcolor{black}{R^{4d-4}} M^2\epsilon^{2\mu(k)} e^{(8R+5)k} \\  &\lesssim K^{-1} \textcolor{black}{R^{4d-4}} M^2 \exp\left\{(8R+5)A - \frac{2E}{\pi}((k/K)^4-1)^{-\frac{1}{2}}\right\} \\ &\lesssim K^{-1} \textcolor{black}{R^{4d-4}} M^2\exp\left\{(8R+5)\frac{K^{\frac{2}{3}}E^\frac{1}{4}}{((8R+5)\pi)^\frac{1}{3}} - \frac{2E}{\pi}(k/A)^2\right\} \\ &\lesssim K^{-1} R^{4d-4} M^2 \exp\left\{-2\left(\frac{(8R+5)^2}{\pi}\right)^\frac{1}{3}K^\frac{2}{3}E^\frac{1}{2}\left(1-\frac{1}{2}E^{-\frac{1}{4}}\right)\right\}.
		\end{align*} Noticing that $\epsilon<e^{-1}$ implies \[
		1-\frac{1}{2}E^{-\frac{1}{4}}>\frac{1}{2}.
		\]  Then we get \begin{align}
			\epsilon^2(k,\theta_1,\theta_2) &\lesssim K^{-1} M^2 \textcolor{black}{R^{4d-4}} \exp\left\{-(8R+5)^{\frac{2}{3}}K^\frac{2}{3}E^\frac{1}{2}\right\}  \lesssim K^{-1} M^2 R^{4d-4} \frac{1}{((8R+5)^{\frac{2}{3}}K^\frac{2}{3}E^\frac{1}{2})^n}. \label{eq4.24}
		\end{align} Here we have used $e^{-x} \le n !/ x^n$ where $n \in \mathbb{N}$. Combining \eqref{eq4.6} and \eqref{eq4.24} gives the estimte \begin{align*}
			|\hat{h}(\xi)|^2 \lesssim M^2\frac{R^{2d}}{A^2} + K^{-1} M^2 A^{2m} \textcolor{black}{R^{4d-4}} \frac{1}{((8R+5)^{\frac{2}{3}}K^\frac{2}{3}E^\frac{1}{2})^n}, \quad |\xi| \le 2A,
		\end{align*}
		which yields \begin{equation} \label{eq4.25}
			\int_{|\xi| \le 2 A^\gamma}|\hat{h}(\xi)|^2\,\mathrm{d}\xi \lesssim R^{2d}M^2A^{d\gamma-2}+\textcolor{black}{R^{4d-4}}A^{d\gamma+2m}K^{-1} M^2\frac{1}{((8R+5)^{\frac{2}{3}}K^\frac{2}{3}E^\frac{1}{2})^n}.
		\end{equation} 
By the condition $\|h\|_{H^s(\mathbb{R}^d)} \le Q$, we obtain \begin{equation} \label{eq4.26}
			\int_{|\xi| > 2 A^\gamma}|\hat{h}(\xi)|^2\,\mathrm{d}\xi \lesssim \frac{Q^2}{A^{2\gamma s}}.
		\end{equation}  Take $n>m+1$ and $\gamma=\frac{2}{2s+d}$ such that $\frac{n-m}{2}-\frac{1}{4}\gamma d > \frac{1}{4}(2-\gamma d)= \frac{1}{2}\gamma s$.
 Therefore, together with \eqref{eq4.25}--\eqref{eq4.26} we have the inequality \begin{align*}
			\|h\|^2_{L^2(D)} &=\int_{|\xi| \le 2 A^\gamma}|\hat{h}(\xi)|^2\,\mathrm{d}\xi+\int_{|\xi| > 2 A^\gamma}|\hat{h}(\xi)|^2\,\mathrm{d}\xi \lesssim E^{-\frac{s}{2s+d}}\left(\frac{(Q^2+1)(R+1)^{2d+\frac{2}{3}}}{K^\frac{8s}{3(2s+d)}}+\frac{M^2\textcolor{black}{R^{4d-4}}}{K^\beta(1+R)^{\frac{2(m+n)}{3}}}\right), 
		\end{align*} where $\beta=1+\frac{2}{3}n-\frac{2}{3}(2m+\frac{2d}{2s+d})$. Taking \textcolor{black}{$n>\max\{3d-7-m,2m+\frac{1}{2}\}$ and $\beta>\frac{8s}{3(2s+d)}$, we have} \begin{equation}
			\|h\|^2_{L^2(D)}\lesssim (R+1)^{2d+\frac{2}{3}} \frac{Q^2+M^2}{ K^\frac{8s}{3(2s+d)} E^{\frac{s}{2s+d}}}. \label{eq4.27}
		\end{equation}
		
		If $2^{\frac{1}{4}}((8R+5)\pi)^{\frac{1}{3}}K^{\frac{1}{3}} \ge E^{\frac{1}{4}}$, it is straightforward to get \begin{align}
			\|h\|^2_{L^2(D)} &=\int_{|\xi| \le 2 K^\gamma}|\hat{h}(\xi)|^2\,\mathrm{d}\xi+\int_{|\xi| > 2 K^\gamma}|\hat{h}(\xi)|^2\,\mathrm{d}\xi \notag\\ &\lesssim K^{2m+\frac{2d}{d+2s}}\epsilon^2+ R^{2d}\frac{M^2+Q^2}{K^{\frac{4s}{2s+d}}} \lesssim K^{2m+\frac{2d}{d+2s}}\epsilon^2 + (R+1)^{2d-\frac{4s}{3(2s+d)}}\frac{M^2+Q^2}{K^{\frac{8s}{3(2s+d)}}E^\frac{s}{2s+d}}. \label{eq4.28}
		\end{align} Combining \eqref{eq4.27} and \eqref{eq4.28}, we complete the proof.
	\end{proof} 
	
The stability estimate \eqref{stability} consists of the Lipschitz data discrepancy and a logarithmic stability. The latter illustrates the ill-posedness of the inverse source
	problem. Observe that as the upper bound $K$ of the frequency increases, the logarithmic stability decreases which leads to the improvement of the stability estimate.
	Clearly, the stability estimate \eqref{stability} implies the uniqueness of the inverse problem.

\section{Inverse problem using far-field data data}\label{far}
In this section, we consider using the far-field data given by
\begin{equation}\label{eq5.1}
u^\infty(\hat{x}, k) = -C_dk^{\frac{d-3}{2}}\int_{\mathbb R^d}e^{-{\rm i}\hat{x}\cdot y}f(y){\rm d}y.
\end{equation} Here $C_d$ is a constant dependent on the dimension $d$. According to \cite{li2022far}, we have for $\tau>0$ that
\begin{align*}
&\mathbb E[u^\infty(\hat{x}, k+\tau) \overline{u^\infty(\hat{x}, k)}]\\
&=|C_d|^2(k+\tau)^{\frac{d-3}{2}}k^{\frac{d-3}{2}}\Big[\int_{\mathbb R^d}\int_{\mathbb R^d} e^{-{\rm i}(k+\tau)\hat{x}\cdot y}  e^{{\rm i}k\hat{x}\cdot z}
\mathbb E[f(y)f(z)]{\rm d}y{\rm d}z\\
&= |C_d|^2(k+\tau)^{\frac{d-3}{2}}k^{\frac{d-3}{2}}  \int_{\mathbb R^d} \Big[\int_{\mathbb R^d} K_f(y, z)e^{-{\rm i}k\hat{x}\cdot(y-z)}{\rm d}z\Big]
e^{-{\rm i}\tau\hat{x}\cdot y}{\rm d}y \Big)\\
&= |C_d|^2(k+\tau)^{\frac{d-3}{2}}k^{\frac{d-3}{2}} \Big[\int_{\mathbb R^d} h(y) e^{-{\rm i}\tau\hat{x}\cdot y}{\rm d}y |k\hat{x}|^{-m} + \int_{\mathbb R^2} a(y, k\hat{x}) 
e^{-{\rm i}\tau\hat{x}\cdot y}{\rm d}y \Big)\Big]\\
&= |C_d|^2 \Big(\frac{k}{k+\tau}\Big)^{\frac{3-d}{2}}k^{d-3-m}\widehat{h}(\tau\hat{x}) + M{O}(k^{d-4-m}).
\end{align*} Hence we have \begin{align*}
|\widehat{h}(\tau\hat{x})|\lesssim \Big|k^{m+3-d}\mathbb E[u^\infty(\hat{x}, k+\tau) \overline{u^\infty(\hat{x}, k)}]\Big| +\frac{M}{k},
\end{align*}which gives \begin{align*}
|\widehat{h}(\xi)|^2\lesssim \sup_{0<\eta<1, \hat{x}\in\mathbb S^2} \Big|k^{m+3-d}\mathbb E[u^\infty(\hat{x}, (1+\eta)k) \overline{u^\infty(\hat{x}, k)}]\Big|^2
+ \frac{M^2}{k^{2}}, \quad\text{for all} ~ |\xi|\leq k.
\end{align*}
Based on the above estimate,
we introduce the data discrepancy in a finite interval $I = [0, K]$ with $0<K$ as follows
\[
\tilde{\epsilon}^2 =  \sup_{k\in I, \eta\in(0, 1), \hat{x}\in\mathbb S^2} \tilde{\epsilon}^2(k, \eta, \hat{x})
\]
where
\[
\tilde{\epsilon}^2(k, \eta, \hat{x}) := \Big|k^{m+3-d}\mathbb E[u^\infty(\hat{x}, (1+\eta)k) \overline{u^\infty(\hat{x}, k)}]\Big|^2.
\] As the source function is real-valued, we have $\overline{u(x, k)} = u(x,-k)$ and then $\overline{u^\infty(x, k)} = u^\infty(x, -k)$.
Hence, we can analytically extend $\tilde{\epsilon}^2(\cdot,\eta, \hat{x})$ from $\mathbb R^+$ to $\mathbb C$ as follows
\[
\tilde{\epsilon}^2(k, \eta, \hat{x}) = k^{2(m+3-d)}  \mathbb E[u^\infty(\hat{x}, (1+\eta)k) u^\infty(\hat{x}, -k)]
 \mathbb E[u^\infty(\hat{x}, -(1+\eta)k) u^\infty(\hat{x}, k)], \quad k\in\mathbb C,
\]
by noticing that for $k>0$
\[
\tilde{\epsilon}^2(k, \eta, \hat{x})=  \Big|k^{m+3-d}\mathbb E[u^\infty(\hat{x}, (1+\eta)k) \overline{u^\infty(\hat{x}, k)}]\Big|^2.
\] Recall $\mathcal{R}=\{z\in \mathbb{C}:|\arg z|<\pi/4\}$. In order to apply Lemma \ref{le4.3} for $\tilde{\epsilon}^2(k, \eta, \hat{x})$ in $\mathcal{R}$, we shall show $\tilde{\epsilon}^2(k, \eta, \hat{x})$ is analytic and bounded for $k \in \mathcal{R}$.
Recalling \eqref{eq5.1}, for $k \in \mathcal{R}$, we have \begin{align*}&\Big|\mathbb E[u^\infty(\hat{x}, (1+\eta)k) u^\infty(\hat{x}, -k)]\Big| \lesssim |k|^{d-3}\Big|\mathbb E\int_D e^{-ik(1+\eta)\hat{x}\cdot y}f(y)\,\mathrm{d}y\int_D e^{ik\hat{x}\cdot z}f(z)\,\mathrm{d}z\Big| \\ &\lesssim |k|^{d-3}\int_D\int_D|e^{-ik(1+\eta)\hat{x}}e^{ik\hat{x}\cdot z}K(y,z)|\,\mathrm{d}y\,\mathrm{d}z \\ &\lesssim |k|^{d-3}e^{3D_0|\Re k|}\|K(y,z)\|_{L^1(D\times D)}
\end{align*} with  $D_0=diam(D)$.
 Lemma \ref{le4.1} gives that the kernel is weakly singular and \[\|K(y,z)\|_{L^1(D\times D)}\lesssim M.\]
Hence, we have the estimate \begin{equation}\label{eq5.3}|\mathbb E[u^\infty(\hat{x}, (1+\eta)k) u^\infty(\hat{x}, -k)]| \lesssim |k|^{d-3}e^{3D_0|\Re k|} M.
\end{equation} 
The above arguments guarantee the analyticity and boundedness of the data. In summary, we arrive at the following lemma.
\begin{Le}\label{le5.1}
Suppose that the random field $f$ satisfies assumption (A). We have that $\tilde{\epsilon}^2(k, \eta, \hat{x})$ is analytic with respect to $k \in \mathcal{R}$ with the following upper bound \[\tilde{\epsilon}^2(k, \eta, \hat{x})\lesssim |k|^{2m}e^{6D_0|\Re k|} M^2.\]
\end{Le}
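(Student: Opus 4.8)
The plan is to exploit the factorized form of $\tilde{\epsilon}^2(k,\eta,\hat{x})$ together with the correlation estimate \eqref{eq5.3} that is already in hand. Write $\tilde{\epsilon}^2(k,\eta,\hat{x}) = k^{2(m+3-d)}\,P(k)\,Q(k)$, where $P(k)=\mathbb{E}[u^\infty(\hat{x},(1+\eta)k)u^\infty(\hat{x},-k)]$ and $Q(k)=\mathbb{E}[u^\infty(\hat{x},-(1+\eta)k)u^\infty(\hat{x},k)]$, and note that $Q(k)=P(-k)$. I would treat the three factors separately, establishing analyticity and the stated size bound for each on the sector $\mathcal{R}$, and then multiply them together.

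First I would confirm that each correlation factor is analytic on $\mathcal{R}$. Using the representation \eqref{eq5.1}, the factor $P(k)$ is an integral over $D\times D$ of a kernel of the form $|C_d|^2\bigl((1+\eta)k\bigr)^{(d-3)/2}k^{(d-3)/2}\,e^{-ik(1+\eta)\hat{x}\cdot y}e^{ik\hat{x}\cdot z}K_f(y,z)$. For each fixed $(y,z)$ the integrand is analytic in $k\in\mathcal{R}$, and by Lemma \ref{le4.1} the kernel $K_f$ is only weakly singular with $\|K_f\|_{L^1(D\times D)}\lesssim M$; combined with the exponential control $|e^{-ik(1+\eta)\hat{x}\cdot y}e^{ik\hat{x}\cdot z}|\le e^{3D_0|\Re k|}$, the defining integral converges absolutely and uniformly on compact subsets of $\mathcal{R}$. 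Differentiation under the integral sign (equivalently, Morera's theorem together with dominated convergence) then yields analyticity of $P$, and of $Q$ by the same argument. Since $\mathcal{R}$ is simply connected and excludes the origin, the prefactor $k^{2(m+3-d)}$ admits a single-valued analytic branch there, so $\tilde{\epsilon}^2(\cdot,\eta,\hat{x})$ is analytic as a product of analytic functions.

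For the upper bound, I would apply \eqref{eq5.3} directly to $P$, and observe that the same derivation—whose $k$-dependence enters only through $|k|$ and $|\Re k|$, both invariant under $k\mapsto -k$—gives the identical estimate for $Q(k)=P(-k)$, namely $|P(k)|,\,|Q(k)|\lesssim |k|^{d-3}e^{3D_0|\Re k|}M$. Multiplying the two factors and the prefactor then yields
\[
|\tilde{\epsilon}^2(k,\eta,\hat{x})|\lesssim |k|^{2(m+3-d)}\cdot|k|^{d-3}e^{3D_0|\Re k|}M\cdot|k|^{d-3}e^{3D_0|\Re k|}M = |k|^{2m}e^{6D_0|\Re k|}M^2,
\]
where the powers of $|k|$ collapse via $2(m+3-d)+2(d-3)=2m$. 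This is precisely the claimed estimate.

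The only genuinely delicate point is the rigorous justification of analyticity: one must verify that the absolute convergence of the defining integral is uniform on compact subsets of $\mathcal{R}$, so that interchange of differentiation and integration is legitimate. This rests on the weak singularity and $L^1$ integrability of $K_f$ from Lemma \ref{le4.1}, which bound the $(y,z)$-integration uniformly in $k$, while $e^{3D_0|\Re k|}$ stays bounded on compacta; the remaining steps are routine exponent bookkeeping.
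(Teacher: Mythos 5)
Your proposal is correct and follows essentially the same route as the paper: both factor $\tilde{\epsilon}^2$ as $k^{2(m+3-d)}P(k)Q(k)$, bound each correlation factor by $|k|^{d-3}e^{3D_0|\Re k|}M$ via the representation \eqref{eq5.1} together with the weak singularity and $L^1$ bound $\|K_f\|_{L^1(D\times D)}\lesssim M$ from Lemma \ref{le4.1}, and then multiply so the powers of $|k|$ collapse to $|k|^{2m}$. Your justification of analyticity (uniform absolute convergence on compacta, Morera/differentiation under the integral, and the branch of the prefactor on the sector) is in fact more careful than the paper's, which simply asserts analyticity from the same estimates.
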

Using Lemma \ref{le4.3} and Lemma \ref{le5.1} we have the following lemma.
\begin{Le}
		Let $f$ satisfy assumption (A). Then we have the estimte \[
		|\tilde{\epsilon}^2(k,\theta_1,\theta_2)| \lesssim M^2 \tilde{\epsilon}^{2\mu(k)} e^{(6D_0+1)k},\quad k \in (K,\infty)
		\] with the function $\mu(k)$ satisfying
		\begin{align*}
			\left\{\begin{array}{cc}
				\mu(k) \ge 1/2, & k \in (K, 2^{1/4} K), \\ 
				\mu(k) \ge \pi^{-1}((k/K)^4-1)^{-1/2},  &  k \in (2^{1/4}K, \infty).
			\end{array}\right.
		\end{align*} .\end{Le}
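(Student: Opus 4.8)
The plan is to reproduce, in the far-field setting, the argument used for Lemma \ref{le4.4}: construct from $\tilde{\epsilon}^2(k,\eta,\hat{x})$ an auxiliary function that satisfies exactly the three hypotheses of the analytic continuation principle in Lemma \ref{le4.3}, apply that lemma, and then undo the modification. Concretely, I would set
\[
U(k) := e^{-(6D_0+1)k}\,\tilde{\epsilon}^2(k,\eta,\hat{x}),\qquad k\in\overline{\mathcal R},
\]
with $\eta\in(0,1)$ and $\hat{x}\in\mathbb S^{d-1}$ fixed. The role of the exponential weight is twofold: it must cancel the factor $e^{6D_0|\Re k|}$ appearing in the bound of Lemma \ref{le5.1}, and the extra $e^{-k}$ is precisely what will absorb the polynomial factor $|k|^{2m}$ in that bound.

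Next I would verify the three hypotheses of Lemma \ref{le4.3} for $U$ on the sector $\mathcal R=\{z:|\arg z|<\pi/4\}$ with $L=K$. Analyticity of $U$ in $\mathcal R$ and continuity on $\overline{\mathcal R}$ follow from Lemma \ref{le5.1}, since the scalar weight $e^{-(6D_0+1)k}$ is entire. The condition $U(0)=0$ holds because $\tilde{\epsilon}^2(k,\eta,\hat{x})$ vanishes as $k\to0$: unwinding the definition, each factor $\mathbb E[u^\infty\,u^\infty]$ carries the prefactor $k^{d-3}$ from \eqref{eq5.1} while the remaining double integral stays bounded, so together with the prefactor $k^{2(m+3-d)}$ one finds $\tilde{\epsilon}^2(k,\eta,\hat{x})=O(|k|^{2m})$, and $m>d-1>0$ forces this to tend to $0$ (this is exactly consistent with the bound in Lemma \ref{le5.1}; note also that, unlike the near-field proof, no auxiliary factor of $k$ is needed here to kill the value at the origin). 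For the bound on the real segment $(0,K]$, I use that $e^{-(6D_0+1)k}\le1$ for $k>0$, so $|U(k)|\le\tilde{\epsilon}^2(k,\eta,\hat{x})\le\tilde{\epsilon}^2$, giving the role of $\varepsilon=\tilde{\epsilon}^2$.

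The main point — and the place where the chosen weight pays off — is the uniform bound of $U$ on the full sector $\mathcal R$. Writing $k=k_1+\mathrm ik_2$ with $k_1=\Re k>0$, Lemma \ref{le5.1} gives
\[
|U(k)|\lesssim e^{-(6D_0+1)k_1}\,|k|^{2m}e^{6D_0 k_1}M^2 = M^2\,|k|^{2m}e^{-k_1}.
\]
On $\mathcal R$ one has $k_1\ge|k|/\sqrt2$, hence $|k|^{2m}e^{-k_1}\le|k|^{2m}e^{-|k|/\sqrt2}$, and the function $r\mapsto r^{2m}e^{-r/\sqrt2}$ is bounded on $[0,\infty)$ by a constant depending only on $m$. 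Thus $|U(k)|\lesssim M^2=:V$ on $\mathcal R$. I expect this absorption of the polynomial growth through the sectorial inequality $\Re k\gtrsim|k|$ to be the only nontrivial step; everything else is bookkeeping. With the three hypotheses in hand, Lemma \ref{le4.3} yields $|U(k)|\le V\varepsilon^{\mu(k)}\lesssim M^2\,\tilde{\epsilon}^{\,2\mu(k)}$ for $k\in(K,\infty)$, with $\mu$ satisfying the stated lower bounds. Finally, undoing the weight, $\tilde{\epsilon}^2(k,\eta,\hat{x})=e^{(6D_0+1)k}U(k)\lesssim M^2\,\tilde{\epsilon}^{\,2\mu(k)}e^{(6D_0+1)k}$ for $k\in(K,\infty)$, which is the claimed estimate. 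Since the resulting constants and $V$ are independent of $\eta$ and $\hat{x}$, the bound holds uniformly in these parameters.
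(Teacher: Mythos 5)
Your proposal is correct and is exactly the argument the paper intends: the paper states this lemma as an immediate consequence of Lemma \ref{le4.3} and Lemma \ref{le5.1}, and your proof fills in the details by mirroring the proof of Lemma \ref{le4.4}, with the weight $e^{-(6D_0+1)k}$ playing the role that $k\,e^{-(8R+5)k}$ played there. Your observations that no auxiliary factor of $k$ is needed (since $\tilde{\epsilon}^2(k,\eta,\hat x)=O(|k|^{2m})$ already vanishes at $k=0$, which is why no $K^{-1}$ appears in the final bound) and that the sector inequality $\Re k\ge |k|/\sqrt2$ lets the extra $e^{-k_1}$ absorb $|k|^{2m}$ are both accurate and consistent with the paper's conventions.
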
 Following the arguments in the proof of Theorem \ref{thm4.1} in a straightforward way, we have the following increasing stability estimate by far-field data. The proof is omitted for 
		brevity.
\begin{Thm}
		Let the random function $f$ satisfies assumption (A) and assume $h \in \mathcal{C}_Q$. Then there holds the inequality \begin{align} \label{farstab}
			\|h\|^2_{L^2(D)} \lesssim K^{\frac{2d}{d+2s}}\tilde{\epsilon}^2 + \frac{Q^2+M^2}{K^{\frac{8s}{3(2s+d)}}E^\frac{s}{2s+d}},
		\end{align} where $E=|\log{\tilde{\epsilon}}|$.
	\end{Thm}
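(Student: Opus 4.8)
The plan is to transcribe the proof of Theorem~\ref{thm4.1} almost verbatim, with the ball $B_R$ and its geometric constants replaced by $D_0=\mathrm{diam}(D)$ and the near-field correlation $\epsilon^2$ replaced by the far-field quantity $\tilde\epsilon^2$. Two inputs are needed. The first is the preceding lemma, which gives for $k\in(K,\infty)$ the analytic-continuation bound
\[
|\tilde\epsilon^2(k,\eta,\hat x)|\lesssim M^2\,\tilde\epsilon^{2\mu(k)}e^{(6D_0+1)k},
\]
with $\mu(k)\ge \pi^{-1}((k/K)^4-1)^{-1/2}$ for $k>2^{1/4}K$; the second is the frequency-domain estimate $|\hat h(\xi)|^2\lesssim \sup_{\eta,\hat x}\tilde\epsilon^2(k,\eta,\hat x)+M^2k^{-2}$, valid for $|\xi|\le k$. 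The key structural simplification relative to the near-field case is that no extra factor $k^{2m}$ appears here, because the normalizing power $k^{m+3-d}$ is already absorbed into $\tilde\epsilon^2$; this is precisely why the Lipschitz term in \eqref{farstab} is $K^{2d/(d+2s)}\tilde\epsilon^2$ rather than $K^{2m+2d/(d+2s)}\tilde\epsilon^2$, and why no power of $D_0$ survives in the final statement.

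Assuming without loss of generality $\tilde\epsilon<e^{-1}$ so that $E=|\log\tilde\epsilon|>1$, I would introduce the same balancing threshold as in Theorem~\ref{thm4.1}, now with $8R+5$ replaced by $6D_0+1$:
\[
A=\begin{cases}
\big((6D_0+1)\pi\big)^{-1/3}K^{2/3}E^{1/4}, & 2^{1/4}\big((6D_0+1)\pi\big)^{1/3}K^{1/3}<E^{1/4},\\[2pt]
K, & \text{otherwise}.
\end{cases}
\]
In the first regime $A>2^{1/4}K$, so for $k\in(K,A]$ the analytic-continuation bound applies. Writing $\tilde\epsilon^{2\mu(k)}=e^{-2\mu(k)E}$, using $\mu(k)\ge\pi^{-1}(K/k)^2$ and $e^{(6D_0+1)k}\le e^{(6D_0+1)A}$, the exponential growth is exactly cancelled by the choice of $A$, giving $\tilde\epsilon^2(k,\eta,\hat x)\lesssim M^2\exp\{-c\,(6D_0+1)^{2/3}K^{2/3}E^{1/2}\}$ for a fixed $c>0$; via $e^{-x}\le n!\,x^{-n}$ this is $\lesssim M^2\big((6D_0+1)^{2/3}K^{2/3}E^{1/2}\big)^{-n}$ for every $n\in\mathbb{N}$.

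I would then split $\|h\|_{L^2(D)}^2=\int_{|\xi|\le 2A^\gamma}|\hat h|^2\,\mathrm{d}\xi+\int_{|\xi|> 2A^\gamma}|\hat h|^2\,\mathrm{d}\xi$ with $\gamma=\tfrac{2}{2s+d}$. On the low-frequency ball I insert the frequency-domain estimate at $k\sim A$: the $M^2k^{-2}$ part contributes $M^2A^{d\gamma-2}$, which collapses to $M^2K^{-8s/(3(2s+d))}E^{-s/(2s+d)}$ since $d\gamma-2=-4s/(2s+d)$ and $A\sim K^{2/3}E^{1/4}$, while the $\tilde\epsilon^2$ part is super-polynomially small and absorbed by taking $n$ large. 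On the tail the constraint $h\in\mathcal{C}_Q$ gives $\int_{|\xi|>2A^\gamma}|\hat h|^2\lesssim Q^2A^{-2\gamma s}$, which equals $Q^2K^{-8s/(3(2s+d))}E^{-s/(2s+d)}$ as well, producing the logarithmic term of \eqref{farstab}. In the second regime $A=K$ the direct splitting at level $K^\gamma$ yields the Lipschitz piece $K^{2d/(d+2s)}\tilde\epsilon^2$ plus $(M^2+Q^2)K^{-4s/(2s+d)}$, and the defining inequality $E^{1/4}\le 2^{1/4}((6D_0+1)\pi)^{1/3}K^{1/3}$ converts the latter into $(M^2+Q^2)K^{-8s/(3(2s+d))}E^{-s/(2s+d)}$ up to a $D_0$-dependent constant. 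Combining the two regimes gives \eqref{farstab}.

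As in Theorem~\ref{thm4.1}, the genuinely delicate point is the calibration of $A$: it must be large enough that $\{|\xi|\le 2A^\gamma\}$ captures enough of $\hat h$ to dominate the $H^s$-tail $Q^2A^{-2\gamma s}$, yet small enough that the growth $e^{(6D_0+1)k}$ does not overwhelm the decay $e^{-2\mu(k)E}$ on $(K,A]$; matching these forces the exponents $2/3$ and $1/4$ in the definition of $A$ and the resulting rate $K^{-8s/(3(2s+d))}E^{-s/(2s+d)}$. Everything else—the analyticity and boundedness of $\tilde\epsilon^2(\cdot,\eta,\hat x)$ on $\mathcal{R}$ established in Lemma~\ref{le5.1}, the weak singularity of $K_f$ furnished by Lemma~\ref{le4.1}, and the continuation principle of Lemma~\ref{le4.3}—is already in place, so the argument is a routine transcription of the near-field proof once the constants $8R+5$ and $6D_0+1$ are matched.
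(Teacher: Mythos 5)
Your proposal is correct and is essentially identical to the paper's intended argument: the paper itself omits this proof, stating only that it follows the proof of Theorem~\ref{thm4.1} using Lemma~\ref{le4.3}, Lemma~\ref{le5.1} and the subsequent continuation lemma, which is exactly the transcription you carry out (with $8R+5$ replaced by $6D_0+1$ and the splitting threshold $A$ recalibrated accordingly). Your structural observation that the factor $k^{2m}$ disappears from the Lipschitz term because the normalization $k^{m+3-d}$ is built into $\tilde{\epsilon}^2$ is precisely the right accounting, and your exponent checks ($d\gamma-2=-4s/(2s+d)$, $A^{-4s/(2s+d)}\sim K^{-8s/(3(2s+d))}E^{-s/(2s+d)}$) match the stated estimate \eqref{farstab}.
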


	\section{Conclusion}\label{conclusions}
	We establish an increasing stability of an inverse random source problem for the Helmholtz equation. The analysis employs properties of the covariance kernel and the explicit Green function. A possible extension of the current work is to investigate the increasing stability of inverse random source problem in an inhomogeneous media, where the explicit Green function is no longer available. Another interesting topic is the stability for the nonlinear inverse random potential problem. We hope to report the progress of these problems in forthcoming works.

			\bibliographystyle{siam}
			\bibliography{reference}

		
	\end{document}